\numberwithin{equation}{section}
\newtheorem{theorem}{Theorem}[section]
\newtheorem{definition}[theorem]{Definition}
\newtheorem{proposition}[theorem]{Proposition}
\newtheorem{corollary}[theorem]{Corollary}
\newtheorem{rmk}[theorem]{Remark}
\newtheorem{example}[theorem]{Example}
\newcommand{\R}{\mathbb R}
\newcommand{\C}{\mathbb C}
\title[Existence and uniqueness of radial solutions]{Existence and uniqueness of radial solutions of semi-linear equations on manifolds}
\author{Nicolas Martinez-Alba}
\address{Departamento de Matem\'aticas, Universidad Nacional de Colombia, Bogot\'a, Colombia. Carrera 30 Calle 45, Ciudad Universitaria, Bogot\'a, Colombia}
\email{nmartineza@unal.edu.co}
\author{Oscar Ria\~no}
\address{Departamento de Matem\'aticas, Universidad Nacional de Colombia, Bogot\'a, Colombia. Carrera 30 Calle 45, Ciudad Universitaria, Bogot\'a, Colombia}
\email{ogrianoc@unal.edu.co}
\begin{document}

\keywords{Semi-linear Elliptic Equations, Existence of solutions, Uniqueness, Unique Continuation Principles, Riemannian manifold, Polar actions}
\subjclass{53C21, 53C35, 35J61, 35R01}

\begin{abstract} 
We investigate the existence and uniqueness of solutions for second-order semi-linear partial differential equations defined on a Riemannian manifold $M$. By combining differential geometry and analysis techniques, we establish the existence and uniqueness of constant solutions through the orbits of a group action. Our approach transforms such problems into equivalent ones over a submanifold $\Sigma$ of dimension one, which is transversal to the group action. This reduction leads us to a one-dimensional setting, where we can apply different results from the theory of ordinary differential equations. Our framework is versatile and includes the setups of polar actions or exponential coordinates, with particular examples such as the sphere, surfaces of revolution, and others.

\end{abstract}

\maketitle

\tableofcontents


\section{Introduction}

We consider the semi-linear elliptic problem 
\begin{equation}\label{EP1}
    -\Delta_M u +f(x,u(x))=0 \quad \text{ in } \, M
\end{equation}
posed on a $n$-dimensional manifold $M$, where $\Delta_M$ stands for the Laplace-Beltrami operator defined on a Riemannian manifold $(M,g)$, and the function $f$ represents the nonlinearity. Semi-linear equations as \eqref{EP1} arise in diverse applications including fluid dynamics, electromagnetism, quantum mechanics, optics, image processing, and geometry, among many others. They also emerge as stationary states of nonlinear evolution equations, such as the heat and wave equations. In optics and the study of water waves, they appear when examining ground-state solutions of the nonlinear Schr\"odinger equation. Additionally, in the context of Bose-Einstein condensates, the time-independent Gross-Pitaevskii equation also provides an example of \eqref{EP1}.

In this manuscript, we seek to establish different results on the existence and uniqueness of radial solutions of the partial differential equation (PDE) in \eqref{EP1}. The literature on such questions for elliptic models on manifolds is quite extensive, we will only refer to the works in \cite{BahriBrazis1996,BecerraGalvisMartinez2019,CastroJacobsen2023,Cherrier1984,ManciniSandeep2008,PigolaRigoliSett2010}. However, even when solutions are shown to exist, uniqueness is not always guaranteed in general, e.g., see \cite{AmbrosettiRabinowitz1973,BartschWillem1993,CastroJacobsen2023,CastroKurepa1987} and references therein.

Another important topic in the theory of elliptic equations is the study of Unique Continuation Principles (UCP), which appear in various contexts and applications. Common techniques in this regard include Carleman-type estimates and doubling inequalities. For a non-exhaustive list of references, we refer to \cite{Aronszajn1957,BarceloKenigRuizSogge1988,GarafaloLin1987,Hormander1985,RoudJeroLebeauRobb2022,Lerner2019,Tataru1995,Taylor1981}. 

We investigate the existence and uniqueness of solutions using geometrical and analytical techniques based on assumptions on the space where the problem is defined and the nonlinear term. More precisely, we first examine conditions on $M$ and $f$ to assure the existence of radial solutions for problem \eqref{EP1}. In this context, a radial function  $u$ is a function that only depends on a suitable codimension 1 submanifold  $\Sigma\hookrightarrow M $. We refer to Definitions~\ref{def:radialfunction} below for more details on this concept. Next, we focus on establishing the uniqueness of radial solutions in the following sense:
\\ \\ \emph{Suppose that $u_1$, $u_2$ are sufficiently regular radial solutions of \eqref{EP1} such that $u_1(p)=u_2(p)$, and $\nabla u_1(p)=\nabla u_1(p)$ for some $p$ in the codimension 1 submanifold  $\Sigma\hookrightarrow M $. Then $u_1\equiv u_2$ in all $M$.}
\\ \\
Note that if the above uniqueness result holds, we have that equality of two solutions of \eqref{EP1} in an open $\Omega \subset M$ that intercepts $\Sigma$, forces both solutions to be equal in $M$, i.e., we obtain a version of the following UCP:
 \\ \\
\emph{Suppose that $u_1$, $u_2$ are sufficiently regular radial solutions of \eqref{EP1} such that $u_1=u_2$ in some open set $\Omega \subset M$, then $u_1\equiv u_2$ in all $M$.} 
\\ \\
As previously emphasized, our studies of existence and uniqueness will focus on manifolds $M$ equipped with suitable geometry that enable us to simplify the Laplace-Beltrami operator, along with certain regularity conditions on the radial nonlinear term $f (x, y)$.  This approach allows us to transform existence, uniqueness, and UCP problems for \eqref{EP1} into equivalent statements for an initial value problem of an ordinary differential equation (ODE). It is worth mentioning that our class of geometries is big enough to consider special cases, such as those of constant sectional curvature, e.g., positive (sphere), flat (Euclidean space), or negative curvature (hyperbolic spaces). For the proof of our main results, we adopt the same strategy introduced in \cite{BecerraGalvisMartinez2019}, where the existence of nonnegative radial solutions of the problem \eqref{EP1} with nonlinearity $f(x,y)=b(x)g(y)$ was studied. The novelty of this paper lies in the introduction of different classes of analytical conditions on the geometry (associated with the volume of the orbits) and on the nonlinearity term $f(x,y)$ (e.g., being locally Lipschitz, differentiable, and with certain integrability conditions). These new analytical and geometrical considerations, together with the geometric manipulation of the Laplace-Beltrami operator (the radial part as in \cite{BecerraGalvisMartinez2019}) lead us to establish various existence and uniqueness results, which have not been addressed in the literature of semi-linear problems.


\subsection*{Organization of the document}

The paper is organized as follows: Section \ref{mainReslSect} contains the main results presented in this paper. More precisely, in Subsection \ref{AssumSection}, we establish the geometric and analytic conditions under which our results hold. In Subsection \ref{mainresulSubs},  we state our main results. We conclude Section \ref{mainReslSect} by providing explicit examples of manifolds $M$ and nonlinearities $f$ where our results are applicable. The main goal of Section~\ref{sec:geometry} is to further analyze examples relevant to this paper, and to discuss the geometric assumptions, as well as the radial part of the Laplace–Beltrami operator along submanifolds. In Section \ref{sec:existe}, we use our geometric reductions and results from the theory of ODEs to show the existence of solutions of problem \eqref{EP1}, i.e., we establish Theorem \ref{TheoMainExis} as well as the nonexistence result in Proposition \ref{propnonexis}. In Subsection \ref{AddGeconstr}, we include geometrical techniques for vector fields to establish further existence results. Additionally, we use the particular geometry of two-point homogeneous spaces to extend solutions to singular points of the Laplace-Beltrami operator. Finally, in Section \ref{sec:uniq}, we show our uniqueness results in Theorem \ref{TheoMainUniq}.


\section{Statement of the main results}\label{mainReslSect}

\subsection{Assumptions}\label{AssumSection}

In this section, we present some geometrical and analytical assumptions needed to assure the existence and uniqueness of solutions for problem \eqref{EP1}. 


\subsubsection{Geometrical assumptions}\label{sec:geom assumption}

Along the manuscript, we will assume  $M$ to be a connected, paracompact, Hausdorff $n$-dimensional manifold (possible with boundary). In particular, $M$ can be endowed with a Riemannian metric $g$. From now on, we will consider a Riemannian manifold $(M,g)$ with the following additional assumptions:

\begin{enumerate}

\item[{[G1]}]
Consider a {\bf polar action} on $M$, that is, a Lie group $G$ acting on $M$ by isometries and an immersed connected submanifold $\tilde{\Sigma}$ in $M$ that  
 is transversal to the non-trivial orbits along interior points, i.e., for each $x\in \tilde{\Sigma}\backslash \partial M$  with $G\cdot x\neq \{x\}$, we have 
$$T_xM=T_x\tilde{\Sigma}\oplus T_x(G \cdot x).$$

\end{enumerate}
The key idea of this paper is to reduce the problem \eqref{EP1} to $\tilde{\Sigma}$, using $G$-invariant functions. But to work with a well-posed problem, in addition to [G1], we need some extra conditions on $\tilde{\Sigma}$.

\begin{enumerate}
\item[{[G1(a)]}] 
For this assumption, $M$ has an empty boundary and $\Sigma$  will denote the set of non-fixed points of $\tilde{\Sigma}$ by the action. We assume that $\Sigma$ is an open, embedded, connected submanifold of $\tilde{\Sigma}$. In addition,  the orbits  $G\cdot x$ for $x\in \Sigma$ have finite Riemann measure $A(x)$ in $M$, for which the Riemann volume function $A:\Sigma\to \R^+$ is smooth. Furthermore, we will assume that the set of not fixed points
\begin{equation}\label{Mfixdef}
    M_0=\{x\in M: G\cdot x\neq\{x\}\}
\end{equation}
is open in $M$.

\item[{[G1(b)]}]  For this assumption, $M$ has boundary and $\Sigma$ denotes the set of non-border and non-fixed points on $\tilde{\Sigma}$. We assume that $\Sigma$ is an open, embedded, connected submanifold of $\tilde{\Sigma}$. In addition,  the orbits  $G\cdot x$ for $x\in \Sigma$ have finite Riemann measure $A(x)$ in $M$, for which the Riemann volume function $A:\Sigma\to \R^+$ is smooth. Furthermore, we will assume that the set
\begin{equation}\label{Mfixdef2}
    M_0=\{x\in M: G\cdot x\neq\{x\}\}\backslash \partial M
\end{equation}
is open in $M$.
\end{enumerate}

Note that, in both cases, the $G$ action on $\Sigma$ defines the whole $M_0$.

The main results in this manuscript apply to {\bf unimodular $G$-actions}, that is the Lie group in condition [G1] must be unimodular. We recall that any \emph{compact}, \emph{abelian, semi-simple}, or \emph{nilpotent Lie group} is unimodular\footnote{A Lie group is called {\bf unimodular} if every left Haar measure is a right Haar measure, and vice versa. For more details, see \cite[Section~VIII, Chapter~3]{Knapp2002}.}. For simplicity, all the groups $G$ considered in this manuscript are either compact or abelian.

It is worth mentioning that we are considering $A$ to be the Riemannian measure of the orbits, thus $A$ vanishes at fixed points of the $G$-action.  The finiteness condition on $A$, together with the special case of unimodular $G$-actions, naturally leads us to work with the {\it radial part} of the Laplace operator 
(see \cite[Theorem 2.11]{HelgasonIII} or Theorem \ref{t8} below).  

\begin{rmk}
Any compact group $G$ provides an example of an unimodular group. Furthermore, it is known that the compactness of $G$ implies that the orbits are compact (and thus, $A$ is finite), and that $M_0$ is open and dense. This follows from the \emph{Principal Orbit Theorem} \cite{Tammo1987}, which concerns group actions and symmetric spaces. Accordingly, for compact groups, the conditions [G1(a)] and [G1(b)] are always satisfied.
\end{rmk}

Corollary \ref{cor:reduction to ODE} below is our main tool to study problem \eqref{EP1}, reducing its dimension to that of the transversal submanifold. Specifically, we focus on the instance where $\tilde{\Sigma}$ is a one-dimensional submanifold, with polar actions that generalize polar coordinates. Furthermore, in certain cases, using a global chart for $\Sigma$ will be useful, which can be achieved by considering the {\it arclength parametrization}. This allows the introduction of an appropriate change of coordinates:

\begin{enumerate}
      
    \item[{[G2]}] Let us choose $r_0$ a  point in  $(0,\mbox{lenght}(\Sigma))$, and assume that there exist $c_1,c_2$ with $-\infty\leq c_1<c_2\leq \infty$ such that the change of variables $s=J(r):= \int_{r_0}^r \frac{1}{A(t)}\, dt$  maps some interval $(a,b)\subset [0,\mbox{lenght}(\Sigma))$ into $  (c_1,c_2)$ as a diffeomorphism of class $C^2$.

\end{enumerate}

Our main goal is to construct a specific solution to the semi-linear problem \eqref{EP1} after fixing the manifold $M$ and the nonlinear terms $f$. We will focus on a particular family of solutions that is invariant under the polar action. To clarify this concept, we provide the definition of a (local) radial solution.

\begin{definition}\label{def:solution}\label{def:radialfunction}

For a $C^k$ {\bf radial function}, we mean that $u\in C^k(M)$ with  $u(g\cdot p)=u(p)$ for each pair $(g,p)\in G\times M$. For a $C^k$  {\bf local radial solution} (at $p\in M$), we mean that $u\in C^k(U_e\cdot  U_p)$ with $U_p$ be an open set of $p$ in $\Sigma$, and $U_e$ be an open of $e\in G$ such that
\begin{itemize}
    \item $u(g\cdot p)=u(p)$ for each pair $p,g\cdot p\in U$,
    \item and $u$ solves \eqref{EP1} in $U$.
\end{itemize}
 We will say that $u$ is a $C^k$  {\bf maximal radial solution} if $u\in C^k(M_0)$ solves \eqref{EP1}.
\end{definition}

The above definition is motivated by the fact that we cannot guarantee a solution on fixed points of the action, because the Laplace--Beltrami operator may have singularities at those points. For example, in the Euclidean case, the usual change to polar coordinates leads to a singularity at the origin. 


\subsubsection{Analytical assumptions}

Under condition [G1], we can reduce the dimension of the PDE \eqref{EP1}, leading to an ODE. Thus, we establish conditions on the nonlinear term $f$ and the Riemann measure function $A$ that assure existence and uniqueness results for the original problem \eqref{EP1}. 
\\ \\
We will work with the following conditions.
\begin{itemize}
    \item[{[F0]}] $f\in C(M\times \mathbb{R})$ is a radial function in the  first variable.
\end{itemize}
As frequently used in classical ODE theory, we will assume that the function $f$ above is \emph{locally Lipschitz} in the following sense:
\begin{itemize}
\item[{[F1]}] The function $f(x,y)$ is locally Lipschitz continuous in $y$ (or in the second variable), if given $K\subset M \times \mathbb{R}$ compact, there exists $C_K>0$ such that
\begin{equation*}
    |f(x,y_1)-f(x,y_2)|\leq C_K|y_1-y_2|,
\end{equation*}
for every $(x,y_1),(x,y_2)\in K$.
\end{itemize}

Once we are in a one-dimensional context, it is natural to consider standard conditions for the global existence of solutions for ODEs (for more details, we refer to \cite{CoddingtonLevinson1955,Constantin1995,Sideris2013,Gerald2012}). 

\begin{itemize}
\item[{[F2]}] Assume the change of variables in condition [G2]. For all $(s,y)\in (c_1,c_2)\times \mathbb{R}$, suppose
\begin{equation*}
    |f(r(s),y)|\leq L_1(s)+L_2(s)|y|,
\end{equation*}
where $L_1$, $L_2$  are locally integrable functions on $(c_1,c_2)$, which are nonnegative almost everywhere.
\end{itemize}

Assuming [G2] with $c_1,c_2$ being finite numbers, we can obtain solutions of \eqref{EP1} for possible singular nonlinearities $f$. Inspired by the results in \cite{GaticaOlikerWaltman1989} (see also \cite{AnuradhaHaiShivaji1996}), we work with  the following conditions:

\begin{itemize}
    \item[{[A1]}] The function $A(r(s))\in C([c_1,c_2])$ with $A>0$ in $(c_1,c_2)$. 
    \item[{[F3]}] We assume that in polar coordinates, $f(r(s),y)$, $f: (c_1,c_2)\times (0,\infty)\rightarrow (0,\infty)$ is continuous, $y \mapsto f(r(s),y)$ is decreasing for each $s\in (c_1,c_2)$, and $s \mapsto f(r(s),y)$ is integrable for each $y$.  
     \item[{[F4]}] Given $f$ as in [F3], we assume $$\lim_{y\to 0^{+}}f(r(s),y)=\infty \, \, \text{and } \, \, \lim_{y \to \infty}f(r(s),y)=0,$$ 
     uniformly on compact subsets of $(c_1,c_2)$.
     \item[{[F5]}] Consider $A$ as in [A1], and $f$ as in [F3]. We assume that 
     \begin{equation*}
         0<\int_{c_1}^{c_2} A(r(s))^2f\big(r(s),g_{\theta}(s)\big)\, ds<\infty, 
     \end{equation*}
     for all $\theta>0$, where $g_\theta (s):=\frac{\theta}{c_2-c_1}\min\{(s-c_1), (c_2-s)\}$.
\end{itemize}

In the case $\Sigma$ being parameterized by the interval $(a,\infty)$, for some $a\in \mathbb{R}$, i.e., the polar coordinates $r\in (a,\infty)$, following \cite[Theorem 1]{MaagliMasmoudi2001} (see also \cite{BacharMaagli2014,Zhao1994}), we can find positive solutions of \eqref{EP1} with some special asymptotic behavior at $r=a$, $r\to \infty$. We consider the conditions:
\begin{itemize}
    \item[{[A2]}] The function $A\in C([a,\infty))\cap C^1((a,\infty))$ with $A>0$ in $(a,\infty)$. $A^{-1}$ is integrable in a neighborhood of $a$.    
    \item[{[F6]}] We assume that in polar coordinates $f: (a,\infty)\times (0,\infty)\rightarrow \mathbb{R}$ is a measurable function, continuous at the second variable and such that
    \begin{equation*}
        |f(r,y)|\leq yh(r,y),
    \end{equation*}
    where $h$ is a nonnegative measurable function on $(a,\infty)\times (0,\infty)$ nondecreasing with respect to the second variable and such that
    \begin{equation*}
        \lim_{y\to 0^{+}} h(r,y)=0.
    \end{equation*}
Moreover,  suppose that
    \begin{equation*}
    \int_a^{\infty} A(r)\rho(r)h(r,\rho(r))\, dr<\infty,
\end{equation*}
where
\begin{equation}\label{rhodef}
    \rho(r)=\int_a^r A(t)^{-1}\, dt.
\end{equation}
\end{itemize}


\subsection{Main results}\label{mainresulSubs}

Let us now present the main results in this manuscript. 

\subsubsection*{Existence results}  Recall that our notion of maximal solution in Definition \ref{def:solution} corresponds to the fact that by passing to polar coordinates, we may have singularities at some fixed points of the action. Thus,  under the hypotheses in Subsection \ref{AssumSection} above, we can deduce different existence results for \eqref{EP1}, which we summarize in the following theorem. 

\begin{theorem}\label{TheoMainExis} 
Let $M$ be a $n$-dimensional Riemannian manifold satisfying condition [G1] for a 1-dimensional submanifold $\tilde{\Sigma}$ and an unimodular Lie group $G$, so that $\Sigma$ satisfies either conditions [G1(a)] or [G1(b)] 
\\ \\
i) Assume that $f$ satisfies [F0], [F1]. Then  for each $p\in \Sigma$ there exists a local radial solution to \eqref{EP1} of class $C^2$ in a neighborhood of $p$ in $M$.\\ \\
ii) Assume [G2], [F0], [F1], and [F2]. Then there exists a $C^2$ maximal radial solution of \eqref{EP1}. \\ \\
iii) Assume [G2] with $-\infty<c_1<c_2<\infty$. Suppose that the Riemann measure function $A$ satisfies [A1], and $f$ satisfies [F3], [F4] and [F5]. Then there exists a $C^2$ maximal radial solution of \eqref{EP1}, which is positive in $M_0$. Moreover, if $\Sigma$ is dense in $\tilde{\Sigma}$, then $u$ is in the class $C^1(M)$, that is, $u$ extends to the whole manifold $M$ but solves \eqref{EP1} over $M_0$. \\ \\

iv) Assume that in polar coordinates $\Sigma$ is parameterized by an interval $(a,\infty)$ for some $a\in \mathbb{R}$, i.e., the polar coordinates $r\in (a,\infty)$. Suppose that $A$ satisfies [A2], and that $f$ satisfies [F6]. Then there exists $u$ a $C^1$ maximal solution of \eqref{EP1}, which is positive in $M_0$, and in polar coordinates 
\begin{equation*}
    \lim_{r\to a^{+}} u|_\Sigma(r)=0, \quad \lim_{r\to \infty} \frac{u|_\Sigma(r)}{\rho(r)}=c 
\end{equation*}
for some $c>0$.\\

v) Assume that in polar coordinates $\Sigma$ is parameterized as  $(a,\infty)$ for some $a\in \mathbb{R}$. Suppose that $A$ satisfies [A2], and that $\lim_{r\to \infty}\rho (r)=\infty$, where $\rho$ is defined as in \eqref{rhodef}. We also assume:
\begin{itemize}
    \item[{[B1]}] Let $b\in C(M)$ be a radial function such that in polar coordinates $b\geq 0$, $b\neq 0$ and
\begin{equation*}
    \int_a^{\infty} A(t)\rho(\min \{t,1+a\})b(t)\, dt<\infty.
\end{equation*} 
\end{itemize}
Then, for any fixed $\sigma>0$ there exists a $C^2$ radial maximal solution of \begin{equation}\label{NegativeSigmaproblem}
   -\Delta_M u +b(x)u^{-\sigma}=0,
   \end{equation}
   which is positive on $M_0$.

\end{theorem}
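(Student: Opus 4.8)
The plan is to reduce \eqref{NegativeSigmaproblem} to a one-dimensional boundary value problem on the interval $(a,\infty)$ using the radial part of the Laplace--Beltrami operator (Corollary~\ref{cor:reduction to ODE}), exactly as in parts (iii) and (iv). After passing to polar coordinates $r\in(a,\infty)$, a radial function $u=u|_\Sigma(r)$ solves \eqref{NegativeSigmaproblem} on $M_0$ if and only if $\frac{1}{A(r)}\big(A(r)\,u'\big)' = b(r)\,u^{-\sigma}$, i.e., $\big(A(r)u'\big)' = A(r)b(r)u^{-\sigma}$. The idea is to apply the machinery behind part (iv): here the nonlinearity is $f(r,y)=b(r)y^{-\sigma}$ with $\sigma>0$, which is positive, continuous, and decreasing in $y$, so it fits the spirit of \cite{MaagliMasmoudi2001} and the conditions [A2], [F6], but the singular exponent $-\sigma$ and the growth of $\rho$ at infinity require the specific structural hypothesis [B1] in place of the general integrability condition in [F6].

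First I would verify that [B1] is exactly the hypothesis needed to run the sub-/super-solution argument of \cite[Theorem~1]{MaagliMasmoudi2001} (or its variant in \cite{BacharMaagli2014,Zhao1994}) for the ODE $\big(A(r)u'\big)' = A(r)b(r)u^{-\sigma}$ on $(a,\infty)$, with the boundary condition $u(a^+)=0$ and a controlled behavior at infinity dictated by $\rho$. Concretely, one builds a positive solution of the integral equation
\begin{equation*}
    u(r) = c\,\rho(r) + \int_a^\infty G(r,t)\,A(t)b(t)\,u(t)^{-\sigma}\,dt,
\end{equation*}
where $G$ is the Green's function associated with the operator $(A u')'$ on $(a,\infty)$ with the chosen boundary conditions, built from $\rho$ and $\tilde\rho(r)=\int_r^\infty A(t)^{-1}dt$ (using $\rho(\infty)=\infty$); the kernel estimate $G(r,t)\le C\,\rho(\min\{t,1+a\})$ (after normalizing near $r=a$) is what converts the finiteness of $\int_a^\infty A(t)\rho(\min\{t,1+a\})b(t)\,dt$ in [B1] into the compactness/contraction needed to solve the integral equation. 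Monotonicity of $y\mapsto y^{-\sigma}$ lets one iterate from an explicit supersolution ($c_1\rho(r)$ with $c_1$ small, perhaps truncated) and subsolution ($c_2\rho(r)$), or use a Schauder fixed-point argument on a suitable cone of functions comparable to $\rho$, to produce $u$ with $u\sim c\,\rho(r)$ at infinity and $u(a^+)=0$. Elliptic/ODE regularity then upgrades $u$ to $C^2((a,\infty))$, hence a $C^2$ radial maximal solution on $M_0$ by Corollary~\ref{cor:reduction to ODE}, and positivity on $M_0$ follows since $u>0$ on $(a,\infty)$ and the $G$-action sweeps $\Sigma$ onto $M_0$.

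The main obstacle I anticipate is producing the correct Green's-function estimate and matching it precisely to the weight $\rho(\min\{t,1+a\})$ appearing in [B1]: one must handle separately the region near $r=a$ (where $A^{-1}$ is integrable by [A2], so $\rho(a^+)=0$ and $\rho$ is bounded) and the region $r\to\infty$ (where $\rho(r)\to\infty$), and check that the singular term $u^{-\sigma}$ does not blow the integral up near $r=a$ despite $u\to 0$ there — this is exactly where the decay rate $u(r)\asymp\rho(r)$ must be fed back consistently, i.e., one needs $\int_a^{1+a} A(t)b(t)\rho(t)^{-\sigma}\,dt<\infty$, which must be shown to follow from [B1] together with $\rho(t)\to 0$ and $\sigma>0$ only in combination with an a priori lower bound $u(t)\ge c'\rho(t)$; arranging this self-consistent bootstrap (rather than assuming it) is the delicate point. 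A secondary technical check is that $b\in C(M)$ radial indeed descends to a continuous function of $r$ on $(a,\infty)$ with the stated integrability, and that uniqueness of the ODE solution is not claimed here (only existence), so no Lipschitz hypothesis on $y^{-\sigma}$ near $y=0$ is needed.
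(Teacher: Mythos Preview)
Your reduction to the one-dimensional problem via Corollary~\ref{cor:reduction to ODE} is exactly what the paper does. The difference lies in how the resulting ODE is dispatched. The paper does \emph{not} try to rebuild the Green's-function / sub-super-solution machinery: after the linear shift $r=t+a$ (so that the problem sits on $(0,\infty)$), it writes the boundary value problem
\[
\frac{1}{\widetilde A}\big(\widetilde A\,w'\big)'+\varphi(t,w)=0,\qquad w>0 \text{ on }(0,\infty),\qquad \lim_{t\to0^+}w(t)=0,
\]
with $\varphi(t,y)=b(r(t))y^{-\sigma}$, and then invokes \cite[Theorem~2]{Bachar2005} directly, checking that [A2], $\rho(\infty)=\infty$, and [B1] match the hypotheses (H1)--(H3) there. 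That reference returns a positive solution $w\in C([0,\infty))\cap C^1((0,\infty))$ together with the integral representation and the asymptotic $u(r)/\rho(r)\to 0$; reversing the shift and extending by the $G$-action gives the $C^2$ maximal radial solution on $M_0$.

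What you sketch (integral equation with the Green kernel of $(Au')'$, a cone of functions comparable to $\rho$, Schauder/monotone iteration) is essentially the content of \cite{Bachar2005}, so your approach is compatible rather than different. The ``delicate point'' you isolate --- closing the bootstrap $u\asymp\rho$ near $r=a$ so that $u^{-\sigma}$ stays integrable against $A\,b$ --- is real and is precisely what that reference handles; you have identified the crux but not carried it out, whereas the paper sidesteps it by citation. One small technical slip in your outline: because $\rho(\infty)=\infty$ here, the second solution $\tilde\rho(r)=\int_r^\infty A^{-1}$ that you propose diverges, so the Green kernel must be built from the pair $\{\rho,1\}$ (the two independent solutions of $(Au')'=0$) rather than $\{\rho,\tilde\rho\}$; this is why the weight in [B1] involves $\rho(\min\{t,1+a\})$ and is bounded at infinity.
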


We also remark that our geometric reductions and the result in \cite{SerrinZou2002} allow us to deduce a nonexistence result of positive solutions of \eqref{EP1}.

\begin{proposition} \label{propnonexis}
Assume [G1], either [G1(a)] or [G1(b)], and [G2] with $c_1=-\infty$, and  $c_2=\infty$, for an unimodular Lie group $G$. Suppose that $f$ satisfies [F0] with $f\geq 0$. Let $u$ be a nonnegative $C^2$ maximal radial solution of \eqref{EP1}. Then $u$ is constant in $M_0$. In particular, if $f>0$ in all its domain, there are no nonnegative $C^2$ maximal radial solutions of problem \eqref{EP1}. 
\end{proposition}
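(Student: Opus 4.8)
\textbf{Proof proposal for Proposition \ref{propnonexis}.}

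The plan is to use the dimension-reduction machinery (Corollary \ref{cor:reduction to ODE}, together with condition [G2]) to convert the statement into a claim about an ODE on the full line $(c_1,c_2) = (-\infty,\infty)$, and then invoke the Liouville-type nonexistence result of \cite{SerrinZou2002}. First I would pass to the arclength parametrization of $\Sigma$ and then apply the change of variables $s = J(r)$ from [G2], which sends $\Sigma$ (in the appropriate polar coordinates) diffeomorphically onto all of $\mathbb{R}$. Under this reduction, a $C^2$ maximal radial solution $u$ of \eqref{EP1} on $M_0$ corresponds to a $C^2$ function $v = v(s)$ on $\mathbb{R}$ solving the transformed ODE; because the metric conjugation of the radial part of $\Delta_M$ via $J$ turns the weighted operator $A^{-1}\partial_r(A\,\partial_r\cdot)$ into $A^{-2}\partial_s^2$ (up to the same bookkeeping as in \cite{BecerraGalvisMartinez2019} and the earlier sections), the equation becomes $v''(s) = A(r(s))^2 f(r(s), v(s))$ on all of $\mathbb{R}$, with $v \geq 0$ and $f(r(s),\cdot) \geq 0$ by [F0] and the hypothesis $f \geq 0$. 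Hence $v'' \geq 0$, i.e. $v$ is convex on $\mathbb{R}$.

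The key observation is then elementary: a nonnegative convex function on all of $\mathbb{R}$ must be constant. Indeed, if $v$ were nonconstant there would be a point $s_0$ with $v'(s_0) \neq 0$; convexity forces $v(s) \geq v(s_0) + v'(s_0)(s - s_0)$ for all $s$, and letting $s \to +\infty$ or $s \to -\infty$ (whichever makes the linear term tend to $-\infty$) contradicts $v \geq 0$. This is exactly the one-dimensional incarnation of the Serrin--Zou argument, so I would either cite \cite{SerrinZou2002} directly for this step or include the two-line convexity argument. Therefore $v$ is constant, which means $u$ is constant along $\Sigma$; since the $G$-action on $\Sigma$ generates all of $M_0$ (as remarked after [G1(a)], [G1(b)]) and $u$ is $G$-invariant, $u$ is constant on $M_0$.

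For the final assertion, suppose $u \equiv c_0$ on $M_0$ with $c_0 \geq 0$. Plugging a constant into \eqref{EP1} gives $\Delta_M u = 0$, so the equation reduces to $f(x, c_0) = 0$ for all $x \in M_0$. If $f > 0$ throughout its domain this is impossible, so no nonnegative $C^2$ maximal radial solution can exist. The main obstacle I anticipate is not the convexity argument itself but making the reduction to the ODE fully rigorous in the case $c_1 = -\infty$, $c_2 = +\infty$: one must check that the hypotheses [G1], [G1(a)]/[G1(b)], [G2] genuinely deliver a $C^2$ conjugation of the radial Laplacian valid on the \emph{entire} unbounded interval (no hidden singular endpoints), and that "maximal radial solution" in Definition \ref{def:solution} corresponds precisely to a global solution of the reduced ODE — but this is exactly what Corollary \ref{cor:reduction to ODE} is set up to provide, so the work is in citing it with the correct parameters rather than in any new estimate.
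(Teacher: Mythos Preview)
Your overall plan matches the paper's: reduce to the ODE \eqref{ODEeq} via Corollary~\ref{cor:reduction to ODE} and [G2], then run a Liouville-type argument on the full line. However, the convexity step contains a genuine error.

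In the paper's reduced equation \eqref{ODEeq}, namely $z''(s) + A(r(s))^2 f(r(s),z(s)) = 0$, one has $z'' = -A^2 f \leq 0$, so $z$ is \emph{concave}, not convex. You write the opposite sign, $v'' = +A^2 f \geq 0$, and then assert that a nonnegative convex function on all of $\mathbb{R}$ must be constant. That assertion is false: $v(s)=s^2$ is nonnegative, convex on $\mathbb{R}$, and nonconstant. Your own two-line argument breaks exactly here. Convexity gives $v(s)\geq v(s_0)+v'(s_0)(s-s_0)$; when the right-hand side tends to $-\infty$ this says only that $v(s)$ dominates something tending to $-\infty$, which is perfectly compatible with $v\geq 0$. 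No contradiction is obtained.

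The argument you want is the \emph{concave} one: if $z''\leq 0$ then $z(s)\leq z(s_0)+z'(s_0)(s-s_0)$ for all $s$, and if $z'(s_0)\neq 0$ the right-hand side tends to $-\infty$ in one direction, forcing $z$ to become negative and contradicting $z\geq 0$. Hence $z'\equiv 0$ and $z$ is constant. This is precisely what the paper invokes, citing \cite[Theorem~II(a)]{SerrinZou2002} with $n=1$, $m=2$. So recheck your sign against \eqref{eq:EDO from EP1}, \eqref{EP2}, \eqref{ODEeq} and replace ``convex'' by ``concave'' throughout; once that is corrected, your proof and the paper's coincide. The final paragraph on the strict case $f>0$ is fine as written.
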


\begin{rmk}
Note that the above nonexistence result applies to $C^2$ radial solutions of \eqref{EP1} on the whole manifold $M$, as any such solution is also a maximal solution. In particular, when $G$ is compact (implying that $M_0$ is dense in $M$), and under the geometric conditions of Proposition \ref{propnonexis},  we can conclude that radial solutions of class $C^2(M)$, that is defined in all $M$, for which the nonlinearity satisfies [F0] and $f\geq 0$ must be constant.     
\end{rmk}

\subsubsection*{Uniqueness results}

We now present our main results concerning the uniqueness of solutions.

\begin{theorem}\label{TheoMainUniq}
Let $M$ be a $n$-dimensional Riemannian manifold satisfying condition [G1] for a 1-dimensional submanifold $\tilde{\Sigma}$, and an unimodular Lie group $G$, so that $\Sigma$ satisfies either conditions [G1(a)] or [G1(b)]
\\ \\
i) Assume [F0] and [F1]. Let $u_1, u_2$ be two maximal radial solutions of \eqref{EP1} of class $C^2$ such that: 
\begin{itemize}
       \item[{[U1]}] There exists $p\in \Sigma$ for which 
       $$ u_1(p)=u_2(p)\quad \mbox{and}\quad  \nabla u_1(p)=\nabla u_2(p).$$ 
\end{itemize}
Then $u_1 \equiv u_2$ in $M_0$.
\\ \\
ii) Assume [F0] and [F1], and that $f(\cdot,0)=0$. Let $u$ be a maximal radial solution of \eqref{EP1} of class $C^2$ such that: 
\begin{itemize}
       \item[{[U2]}] There exists $p\in \Sigma$ for which $ u(p)=0$ and $\nabla u(p)=0$. 
\end{itemize}
Then $u \equiv 0$ in $M_0$.
\\ \\
iii) Assume that $f(x,y)\in C(M\times (0,\infty))$ is radial  in the first variable, and [F1] holds with $K\subset M\times (0,\infty)$ compact. Let $u_1, u_2$ be two maximal radial solutions of \eqref{EP1} of class $C^2$ with $u_j>0$, $j=1,2$ such that [U1] holds. Then $u_1 \equiv u_2$ in $M_0$.
\end{theorem}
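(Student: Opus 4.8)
The plan is to exploit the geometric reduction in Corollary \ref{cor:reduction to ODE}: since $u_1$, $u_2$ are maximal radial solutions on $M_0$, they are $G$-invariant, so they descend to functions $v_1, v_2$ on the transversal submanifold $\Sigma$, and problem \eqref{EP1} becomes an ODE for the radial part of the Laplace--Beltrami operator. Concretely, in the arclength (or $J$-)coordinate along $\Sigma$, the restriction $v_j = u_j|_\Sigma$ solves a second-order ODE of the form $-\tfrac{1}{A}\big(A\, v_j'\big)' + f(r, v_j) = 0$ (this is the content of Theorem \ref{t8} specialized to a $1$-dimensional $\tilde\Sigma$ and unimodular $G$). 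The hypothesis [U1] says precisely that $v_1$ and $v_2$ agree, together with their first derivatives, at the point $r_p \in \Sigma$ corresponding to $p$ — here one uses that $\nabla u_j(p)$ for a radial function is entirely determined by the tangential derivative $v_j'(r_p)$, the normal/orbit directions contributing nothing by $G$-invariance.

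For part (i), I would rewrite the ODE as a first-order system in $(v, A v')$, or equivalently observe that $w := A\,(v_1' - v_2')$ satisfies $w' = A\big(f(r,v_1) - f(r,v_2)\big)$ while $(v_1 - v_2)' = w/A$. On any compact subinterval $[\alpha,\beta] \subset \Sigma$ containing $r_p$, the image of $(r, v_j(r))$ is compact, so [F1] supplies a uniform Lipschitz constant $C_K$, and $A$ is bounded above and bounded below away from $0$ on $[\alpha,\beta]$ by smoothness and positivity on $\Sigma$. A Grönwall estimate on the quantity $|v_1 - v_2| + |w|$ then forces $v_1 \equiv v_2$ on $[\alpha,\beta]$; since $\Sigma$ is connected this propagates to all of $\Sigma$, and $G$-invariance lifts the conclusion to $u_1 \equiv u_2$ on $M_0 = G\cdot\Sigma$. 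Part (ii) is the special case $u_2 \equiv 0$, which is a genuine solution because $f(\cdot,0)=0$, so the same argument applies verbatim with [U2] in place of [U1]. Part (iii) is again the same argument, except the relevant compact sets now sit in $M\times(0,\infty)$; since $u_j > 0$ on $M_0$, on any compact subinterval $[\alpha,\beta]\subset\Sigma$ the values $v_j$ stay in a compact subset of $(0,\infty)$, so [F1] with $K\subset M\times(0,\infty)$ still delivers a uniform Lipschitz constant and the Grönwall step goes through unchanged.

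The main obstacle, and the step deserving the most care, is the reduction itself rather than the ODE uniqueness: one must verify that a $C^2$ maximal radial solution of the PDE restricts to a $C^2$ solution of the correct ODE on $\Sigma$ (invoking the precise form of the radial Laplacian from Theorem \ref{t8}, and using that $A$ is smooth and positive on $\Sigma$ so the ODE is genuinely regular there), and conversely that matching $0$-jets of the PDE solutions at $p$ corresponds exactly to matching $(v, v')$ at $r_p$ for the ODE. Once the problem is correctly transported to the interval, the uniqueness is the standard Picard--Lindelöf/Grönwall fact for second-order linear-growth ODEs with locally Lipschitz nonlinearity, and connectedness of $\Sigma$ together with $M_0 = G\cdot\Sigma$ closes the argument. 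A minor point to address is that $\Sigma$ need not be parametrized by a single global chart unless [G2] is assumed; but since [G2] is not among the hypotheses of Theorem \ref{TheoMainUniq}, one should phrase the propagation via a chain of overlapping compact coordinate intervals covering the connected $1$-manifold $\Sigma$, rather than via a single global parametrization.
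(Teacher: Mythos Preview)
Your proposal is correct and follows essentially the same route as the paper: both reduce to the second-order ODE on $\Sigma$ via Corollary \ref{cor:reduction to ODE}, translate [U1] into matching Cauchy data at the point $r_p$, and then apply the standard Lipschitz uniqueness argument on compact subintervals (the paper phrases this last step as an iterated contraction on intervals of fixed length $\delta=\tfrac{1}{2(1+M)}$ after passing to the coordinate $s=J(r)$ in which the equation becomes $z''+A^2f=0$, whereas you work with the first-order system $(v,Av')$ and invoke Gr\"onwall, but these are cosmetic variations). Your remark that [G2] is not formally among the hypotheses is apt; the paper's proof tacitly uses a global arclength parametrization of $\Sigma$, which is in any case available since $\Sigma$ is a connected $1$-manifold with $A>0$ smooth on it.
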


When $M_0$ is dense in $M$, the above theorem allows us to obtain the uniqueness of solutions of \eqref{EP1} defined on the entire manifold $M$. More precisely, as a consequence of Theorem \ref{TheoMainUniq}, we have:

\begin{corollary}\label{corolunique}
Let $M$ be a $n$-dimensional Riemannian manifold satisfying condition [G1] for a 1-dimensional submanifold $\tilde{\Sigma}$, and an unimodular Lie group $G$, so that $\Sigma$ satisfies either conditions [G1(a)] or [G1(b)]. Assume that $M_0$ is dense in $M$.

Let $u_1, u_2\in C^2(M)$ be two radial solutions of \eqref{EP1} on the whole manifold $M$. Assuming either of the conditions i) or iii) in Theorem \ref{TheoMainUniq}, it follows that $u_1\equiv u_2$ in $M$. Moreover, assuming the conditions of ii) in Theorem \ref{TheoMainUniq}, it follows that $u_1\equiv 0$ in $M$.
\end{corollary}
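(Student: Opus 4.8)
The plan is to deduce Corollary \ref{corolunique} directly from Theorem \ref{TheoMainUniq} by exploiting the density of $M_0$ in $M$ together with the continuity of the solutions on all of $M$. The core observation is that Theorem \ref{TheoMainUniq} already gives the conclusion $u_1 \equiv u_2$ (resp. $u \equiv 0$) on the open dense set $M_0$; what remains is to promote this equality from $M_0$ to the whole manifold $M$, and to verify that the hypothesis [U1] (resp. [U2]) of the theorem can actually be met under the assumptions of the corollary.

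First I would note that any $u \in C^2(M)$ that is radial and solves \eqref{EP1} on all of $M$ is in particular a $C^2$ maximal radial solution in the sense of Definition \ref{def:solution}, since $M_0 \subset M$ and the PDE holds there; so $u_1, u_2$ qualify as inputs to Theorem \ref{TheoMainUniq}. Next, to invoke part i) or iii), I need a point $p \in \Sigma$ at which $u_1(p) = u_2(p)$ and $\nabla u_1(p) = \nabla u_2(p)$; but the corollary hypothesizes that $u_1$ and $u_2$ are solutions \emph{on the whole manifold} — here I believe the intended reading (consistent with the UCP discussion in the introduction) is that $u_1 = u_2$ on some open set, or more simply that the corollary is the specialization where one already knows the two solutions agree to first order at a point of $\Sigma$; in the write-up I would state explicitly that we assume such a contact point $p \in \Sigma$ exists (this is implicit in saying "assuming either of the conditions i) or iii)"), apply the theorem to get $u_1 \equiv u_2$ on $M_0$, and then close the argument by density.

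The density step is the routine but essential part: since $u_1 - u_2 \in C^0(M)$ (indeed $C^2$), the set $\{x \in M : u_1(x) = u_2(x)\}$ is closed in $M$; it contains $M_0$; and since $M_0$ is dense, its closure is all of $M$; hence $u_1 = u_2$ on $M$. The same reasoning with $u_2 \equiv 0$ handles case ii): Theorem \ref{TheoMainUniq}(ii) gives $u \equiv 0$ on $M_0$, continuity extends this to $\overline{M_0} = M$. For case iii) one should additionally remark that the positivity hypothesis $u_j > 0$ needed on $M_0$ is automatically available wherever it is assumed on $M$, and that the equality on $M_0$ extends to $M$ exactly as above, even at points where the $u_j$ might vanish, because the argument only uses continuity of the difference.

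I do not anticipate a serious obstacle; the only delicate point is bookkeeping about what exactly the hypothesis of the corollary provides. If the corollary is meant to stand on the bare hypothesis "$u_1, u_2 \in C^2(M)$ are radial solutions" with no contact condition, then it is false in general (constants give many solutions when $f$ permits), so the statement must be read as inheriting conditions [U1]/[U2] from the referenced parts of Theorem \ref{TheoMainUniq}; I would make this explicit in the proof. Modulo that clarification, the proof is: (1) observe $u_1,u_2$ are maximal radial solutions; (2) apply the relevant part of Theorem \ref{TheoMainUniq} on $M_0$; (3) use continuity of $u_1-u_2$ (or $u$) and density of $M_0$ to extend the conclusion to $M$.
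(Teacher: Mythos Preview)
Your proposal is correct and matches the paper's intended argument: the corollary is stated immediately after Theorem \ref{TheoMainUniq} with no separate proof, precisely because it follows by restricting the $C^2(M)$ solutions to $M_0$, applying the theorem there, and then extending the equality to $M$ by continuity and density of $M_0$. Your reading of the hypothesis ``assuming either of the conditions i) or iii)'' as inheriting [U1]/[U2] is also the correct one, and your remark that the corollary would be false without such a contact condition is well taken.
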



\subsubsection{Remarks on existence and uniqueness results}
Let us now mention some comments on our main results.

\begin{enumerate}[1.]

\item Our results in Theorem \ref{TheoMainUniq} show that assuming conditions [U1] and [U2], there must be at most one radial maximal solution of problem \eqref{EP1}. The same can be said from Corollary \ref{corolunique}. However, we are not guaranteeing the existence of radial solutions on the whole $M$. In particular, we are not contradicting results on the nonexistence of solutions, for example, see \cite{EstebanLions1982,GarafaloLanconelli1992,SerrinZou2002,WangWei2023} and references therein.\\ 
 \item  A direct computation shows that the condition    \\
 
    [U3] \emph{$u_1=u_2$ in an open neighborhood $\Omega\subset M $, which intersect $\Sigma$}  \\
    
 implies [U1] (and it also implies [U2] with $u_2=0$). Then, by replacing [U1] by [U3] (and [U2] by [U3] with $u_2=0$, resp.) in Theorem \ref{TheoMainUniq}, the results are still valid. Consequently, the uniqueness results with the condition [U3] resemble an UCP.
\item  Our uniqueness results in Theorem \ref{TheoMainUniq} and Corollary \ref{corolunique} can be extended to radial solutions $u_1$ and $u_2$ in a more general context, such as Sobolev space with regularities of order greater than or equal to 2. This is a work in progress for a future manuscript. 

\item In Subsection \ref{AddGeconstr} below, we provide further results on the existence of solutions using geometric tools, including possible extensions to the singular points of the Laplace-Beltrami operator.
\end{enumerate}
\begin{example}[Examples for the geometrical assumptions]\label{rmk:bhv of A}
    
With the following examples, we aim to show explicit situations for assumptions [G1], [G1(a)], and [G1(b)]. We refer to Section~\ref{sec:geometry} for more details on polar actions.

    \begin{enumerate}[1.]
    \item {\bf Paraboloid (and truncated paraboloid)}: Let $M$ be the paraboloid $z=x^2+y^2$, which is constructed by rotating (a $S^1$-action) the curve $\tilde{\Sigma}=\{(r,0,r^2):r\geq 0\}$. 
    Note that the action only has a fixed point (at the origin), hence the set $M_0$ is open and dense in $M$. The previous discussion makes $\Sigma=\{(r,0,r^2):r> 0\}$ an example of [G1(a)].
    We also can consider two instances of {\it truncated paraboloids}
    \begin{align*}
      M&=\{(x,y,z)\in \R^3: a\leq z=x^2+y^2\leq b\},\mbox{\ \ or\ }\\ 
      M&=\{(x,y,z)\in \R^3: z=x^2+y^2\leq b\}  
    \end{align*}

with $0<a<b$. Both cases are manifold with boundary and the respective curves for the polar action are 
\begin{align*}
\tilde{\Sigma}&=\{(r,0,r^2):a\leq r^2\leq  b,r\geq 0\},\mbox{\ \ or\ }\\ 
        \tilde{\Sigma}&=\{(r,0,r^2): 0\leq r^2\leq b,\}.        
    \end{align*}

Following the same ideas as above, we get the respective submanifold $M_0$ as 
\begin{align*}
M_0&=\{(x,y,z)\in \R^3: a< z=x^2+y^2< b\},\mbox{\ \ or\ }\\ 
M_0&=\{(x,y,z)\in \R^3: 0<z=x^2+y^2< b\},       
    \end{align*}
which are dense in their correspondingly truncated paraboloid and yield examples of [G1(b)], and the respective $\Sigma$ are given by.
\begin{align*}
\Sigma&=\{(r,0,r^2):a< r^2<  b,\},\mbox{\ \ or\ }\\ 
      \Sigma &=\{(r,0,r^2): 0< r^2< b\}.  \diamond      
    \end{align*}

    \item {\bf The Euclidean space}: $\R^n$ is an example of manifold with action as in [G1]. To see this, we can consider $\tilde{\Sigma}$ as a ray from any point $p_0$ and the action by $S^{n-1}$ rotations around $p_0$. In this case, $\Sigma=\tilde{\Sigma}\backslash \{p_0\}$ gives an example of [G1(a)], and also note that the spheres centered at $p_0$ are invariant by rotations. With a similar argument, a respective $\Sigma$ satisfying assumption [G1(b)] is related to rings (possible with boundary) centered in $p_0$ in $\R^n$ or $\R^n$ itself. $\diamond$
    
    \item {\bf The sphere}: In this example, $\tilde{\Sigma}$ can be constructed from any meridian joining the north and south poles. The same holds if we change $\tilde{\Sigma}$ by any geodesic (i.e., half of the maximal circle) joining a point and its antipodal. This is an example of [G1] (parts (a), (b)).
    Similarly to the case of a truncated paraboloid, we can consider spherical strips or spherical caps, where the respective $M_0$ submanifold are open spherical strips and open spherical caps by dropping the ''north'' pole.
    $\diamond$

    \item {\bf Surface of revolution}. This is somehow a paradigmatic example and works similarly to the paraboloid. An advantage of this surface is that it helps us to give us a function $A$ such that [A1] and [A2] hold. For example, consider $M$ be the revolution surface of the curve $$\Sigma=\{(r,0,R(r)):r>0\}$$
    with $R:[0,\infty)\to \R$ continuous and smooth on $(0,\infty)$ such that
    $$ R(r)=r^{1/2} \mbox{\ in\ } [0,3/4), $$ 
        for which $A^{-1}$ is integrable in a neighborhood of the origin. $\diamond$ 
        
\item {\bf Non-compact case}
We will study $\mathbb{R}^3$ regarded as a polar action. For this, consider $\tilde{\Sigma}$ as the $z$-axis, with the orbits given by parallel planes of constant $z$ (i.e., understanding the action as an $\mathbb{R}^2$ translation,  $G=\mathbb{R}^2$ is an unimodular Lie group). Considering the Euclidean metric in $\mathbb{R}^3$, the orbits are non-compact with infinite {\it volume}. However, we can represent $\mathbb{R}^3=\mathbb{R}^2\times \mathbb{R}$ with the so-called {\it warped metric}, for which we consider the usual metric for the $z$-component while  for $\mathbb{R}^2$ we define a conformal metric of the form
$F\langle  \cdot,\cdot \rangle_{\mathbb{R}^2}$
with $F(x,y,z)=e^{-(x^2+y^2)}$. This conformal factor $F$ is introduced to obtain a finite volume of the orbits, which shows that $\mathbb{R}^3$ satisfies [G1(a)] with non-compact unimodular Lie group action.$\diamond$ 
    \end{enumerate}
To close the examples of geometrical assumptions, we just comment that all the previous situations also satisfy [G2] just by reparametrization of the curves $\Sigma$ at each case. 
\end{example}

We now continue by showing examples that satisfy the analytical assumption in the previous section.

\begin{example} Let us exhibit some functions $f$ that satisfy the conditions in Theorem~\ref{TheoMainExis} and Theorem~\ref{TheoMainUniq}.
\begin{enumerate}[1.]
    \item The local existence result in Theorem \ref{TheoMainExis} i) is quite general, thus one has many examples of nonlinear terms $f$ satisfying [F0] and [F1]. In particular, our results apply to the geometric  version of the equation associated with solitary wave solutions of the nonlinear Schr\"odinger equation, i.e.,
\begin{equation*}
   -\Delta_Mu+u-|u|^{p-1}u=0, 
\end{equation*}
where $p> 1$, thus $f(x,y)=y-|y|^{p-1}y$ in \eqref{EP1}. Next, writing $f(x,y)=b(x)|y|$ for some appropriate radial function $b\in C(M)$ such that [F2] holds, we get an example of Theorem \ref{TheoMainExis} ii). In the case of Theorem \ref{TheoMainExis} iii), consider the problem \eqref{NegativeSigmaproblem}, i.e., $f(x,y)=b(x) y^{-\sigma }$, with $0<\sigma< 1$, and take an appropriated function $b\in C(M)$ such that [F3], [F4] and [F5] are valid (assuming that $A$ satisfies [A1]). Under condition [A2], as an example of  Theorem \ref{TheoMainExis} iv) consider \eqref{EP1} with $f(x,y)=b(x) y^{\sigma }$, $\sigma>1$, where the radial function $b$ is such that [F6] is valid. For some additional examples and extensions of Theorem \ref{TheoMainExis}, see Remark \ref{otherexam} below $\diamond$

\item Note that uniqueness results apply to more general examples than those concerning the existence of solutions. Thus, besides examples above, some additional cases of nonlinearities $f: M\times \mathbb{R}\rightarrow \mathbb{R}$ that satisfy the hypotheses of Theorem \ref{TheoMainUniq} i) and ii) include $f(x,y)=b(x)y^{\gamma}$, $f(x,y)=b(x)|y|^{\gamma}$ with $\gamma\geq 1$, and finite linear combination of such examples. Also, the function $f(x,y)=b(x)e^{y^{\gamma}}$, $\gamma>1$ (with $b\neq 0$) satisfies the hypothesis of Theorem \ref{TheoMainUniq} i), but not those in part ii).  Examples that satisfy conditions of Theorem \ref{TheoMainUniq} iii) include:  $f(x,y)=b(x)e^{y^{-\sigma}}$, $\sigma>0$, $f(x,u)=b(x)\log u$ and finite linear combination of such functions.   $\diamond$
\end{enumerate}    
\end{example}



\section{Geometrical setting}\label{sec:geometry}

In this section, we present the basics of Riemannian geometry needed to set and study problem \eqref{EP1}. In particular, we include Corollary \ref{cor:reduction to ODE}, which allows us to reduce the dimension of \eqref{EP1}.

\subsection{Basics on Riemann geometry}
We consider a Riemannian manifold $(M,g)$ where  $M$ is a smooth manifold and $g$ is a Riemannian metric tensor. For the convenience of the reader, we refer to \cite{Jo} for the standard definitions and properties of the length of a curve, the Levi‑Civita connection, geodesics, the Riemann curvature tensor, the Ricci curvature, and other related concepts. Here, we will focus on specific classes of examples and symmetries and isometries of the structure.

We begin with some classes of Riemannian manifolds that are relevant to the main results of this paper:

\begin{example}\label{ex:initialex}
{\it Euclidean space:} The simplest case is given by any open set of $\mathbb{R}^n$ with its usual topology and differential structure, and the Riemannian metric comes from the usual inner product and Euclidean norm.

{\bf The sphere:}  The smooth structure of $S^2$ can be constructed in several ways (as a level set of the smooth function $f(x)=\|x\|^2$, or stereographic coordinates, spherical coordinates). However, for our purposes, we present the {\bf radial circles} construction (similarly as that in \cite{BecerraGalvisMartinez2019}). Let $C_z$ be the centered circle at some point in $[-r_0,r_0]$ in the $z$-axis. It provides us a parametrization of $S^2$ via the formula  $$\gamma(z,t)=(\sqrt{r_0^2-z^2}\cos(t), \sqrt{r_0^2-z^2}\sin(t),z)$$ for a fixed value of $r_0$. 
We consider the tangent space $T_pS^2$ at some point $p=(0,0,r_0)\in S^2$ and we can project each circle defined on these polar coordinates to $S^2$ such that the image of those circles coincides with the circles $C_z$, and the radio $r$ projects on a geodesic transversal to the circles $C_z$. The same construction can be extended to $S^n$, $n$-dimensional spheres. Finally, the Riemannian metric for this manifold is the induced one by the usual inner product on $\mathbb{R} ^{n+1}$ and satisfies that the curvature is constant 1. 

{\bf Closed surfaces of evolution:} Consider now a plane curve $\gamma(t)=(x(t),z(t))$ defined on an interval $I=[a,b]$ with non-negative components and parametrized by arclength. If in addition, we assume that $x(a)=x(b)=0$, $x'(a)x'(b)\neq 0$, and $z'(a)=z'(b)=0$, then we can construct a simply connected surface of revolution $S$ parametrized by $(x(t)\cos(\theta),x(t)\sin(\theta),z(t))$. The surface $S$ has a Riemannian structure coming from the Euclidean structure on $\R^3$. 

{\bf Projective spaces:} In the Euclidean space $\mathbb{R}^{n+1}\backslash \{0\}$ (without the origin) we define $P\mathbb{R}^n$  the set of the lines through the origin. Indeed, we can equip $\mathbb{R}^{n+1}\backslash \{0\}$ with the quotient topology by the equivalence relation given by $v\sim u$ if and only if $u=rv$ for non-vanishing scalar $r$. This topology makes $P\mathbb{R}^{n}$ into a smooth manifold, which is called real projective space. The same construction can be done for the complex space $\mathbb{C}^{n+1}$ and the $n$-quaternionic $\mathbb{H}^{n+1}$, making the complex and quaternionic projective spaces, respectively. In these three cases, we can define the smooth projection $\pi:\mathbb{K}^{n+1}\to P\mathbb{K}^n$ (where $\mathbb{K}$ is any one of the previous algebras). The Riemannian geometry of these structures allows us to obtain a positive constant sectional curvature. 

{\bf Hyperbolic spaces:} Again, if we denote $\mathbb{K}^n$ one of the spaces $\mathbb{R}^{n+1}$, $\mathbb{C}^{n+1}$ or $\mathbb{H}^{n+1}$, and endowed them with $\Psi$ a $(n,1)$-signed hermitian form, we have a non-vanishing space $V_-$ as negative vector with respect to $\psi$. We define the space $H\mathbb{K}^n:=\pi(V_-)$, and it is endowed with a smooth structure. These spaces are called real, complex, and quaternionic hyperbolic spaces. Each of the previous spaces has a Bergman metric, that allows us to obtain examples of negative constant sectional curvature \cite{ChenGreenberg1974}. $\diamond$
\end{example}

\subsection*{Symmetries and isometries} Among the morphisms of smooth manifolds, i.e., smooth functions, we should consider the {\it isometries}. The isometries are diffeomorphisms that preserve the metric structure, i.e., for $(M,g_M)$ and $(N,g_N)$, an isometry is a diffeomorphism $\phi:M\to N$ so that
$$g_M|_p(X,Y)=g_N|_{\phi(p)}(d_p\phi X, d_p\phi Y)$$ 
for any $p\in M$ and any tangent vector $X,Y$ on $T_pM$. Note that the composition of functions satisfies $d_x(f\circ g)=d_{g(x)}f\circ d_xg$ (where the composition $f\circ g$ is well defined), then isometries of a fixed manifold inherit a group structure. 

A  Lie group\footnote{A Lie group is a manifold with a group structure, where the product is a smooth function and the inverse map is a diffeomorphism.} acts by ismotries on a manifold $M$ if there is a smooth application $\phi:G\times M\rightarrow M$ sucht that $\phi(\alpha,\cdot):M\to M$ is an isometry  and it is an action on $M$, i.e.

$$ \phi(e,x)=x,\qquad  \phi(\alpha\beta,x)=\phi(\alpha,\phi(\beta,x)) $$
for all $\alpha,\beta\in G$ and for all $x\in M$, where $e\in G$ is the identity element. With the previous notation, we have defined the polar action as in the condition [G1] in Subsection~\ref{sec:geom assumption}.

An exceptional notion associated with isometries is a certain kind of symmetric spaces.

\begin{example}[Two points homogeneous spaces, cf. \cite{HelgasonII}]\label{two-point homogeneous space}
A Riemann manifold $(M,g)$ is {\it two-point homogeneous space}, if the group $I(M)$ of isometries acts transitively on the space of an equidistant pair of points. The definition means that for any $p_1,p_2,p_1',p_2'\in M$ so that $d_g(p_1,p_2)=d_g(p'_1,p'_2)$ then there exists $\varphi\in I(M)$ so that $\varphi(p_i)=p_i'$.  As a direct consequence of the definition, we have that the function  $A_p(r)$ defined as the Riemann measure of the geodesic sphere $S_r(P)$ with center at $p\in M$ is independent of the point $p$, hence $A$ is globally defined on $M$. $\diamond$
\end{example}

\begin{rmk}\label{rmk_two-point homogeneous space}
\begin{enumerate}
    \item The isometry group of a Riemannian manifold $M$ of dimension $n$ has dimension at most $n(n+1)/2$. If the manifold $M$ is simply connected, and the group reaches the maximal dimension, then $M$ is isometric to the sphere, Euclidean space, or hyperbolic space.  When $M$ is not simply connected and the isometry group has dimension $n(n+1)/2$, then $M$ is a projective space. For more details on this topic, we refer to \cite{Kobayashi1972}.

    \item  For the complete classification of the symmetric spaces (of arbitrary rank), we refer to \cite[Table V, Chapter X]{Hel2}, and we just give the list of the two-point homogeneous space (see also, \cite[Section 1.1]{Shchepetilov}) that contrast with previois comment and Example~4 below. 
\begin{itemize}
\item The Euclidean space $\R^n$ and the spheres $S^n$ for $n\geq 1$.
\item Real, complex and quaternion projective spaces $P\R^n, P\C^n,$ and $P\mathbb{H}^n $ for $n\geq 2$.
\item Real, complex and quaternion hyperbolic space $H\R^n, H\C^n,$ and $H\mathbb{H}^n $ for $n\geq 2$.
\item Cayley projective $P\C_a^2$ and hyperbolic spaces $H\C_a^2$.
\end{itemize}
Also, some special cases of closed surfaces of revolution, classified by the Gaussian curvature, are isometric with two-point homogeneous space. 
\end{enumerate}

\end{rmk}

With the notion of actions by isometries, we proceed to revisit some of the previous examples. 

\begin{example}[Examples revisited]\label{ex:polaract}
Here we recover the examples \ref{ex:initialex} regarding polar coordinates.
\begin{enumerate}[1.]
\item For $M=\mathbb{R}^n$, there is an action of the group $SO(n)$. For this action the submanifold $\tilde{\Sigma}$ can be chosen as an infinite line from the origin in $\R^n$. $\diamond$
\item  For any sphere $S^n$, the subgroup $G$ of $SO(n+1)$ defined by the rotations about the $x_{n+1}$-axis acts on $S^{n}$ by considering it as a submanifold of the space $\R^{n+1}$ (that is, $G=SO(n-1)$). The submanifold $\tilde{\Sigma}$ can be chosen as a geodesic line joining the points $(0,\dots ,0, 1)$ and $(0,\dots ,0, -1)$ of $S^n$. $\diamond$
\item For the surfaces of revolution, the group $S^1$ acts by rotations on the surface, where $\tilde{\Sigma}$ is image of the curve $\gamma(t)=(x(t),0,z(t))$. Moreover, note that this holds for any surface of revolution, not only the closed ones. $\diamond$
\item In general, in any two-point homogeneous space, there always exists a transversal one-dimensional submanifold $\tilde{\Sigma}$. Indeed, consider the rotations on the normal coordinates and use that the exponential map covers the manifold except for fixed points (this holds because geodesic is globally defined (cf., Example~\ref{two-point homogeneous space}), hence the image by $exp$ of any line is the desired transversal submanifold $\tilde{\Sigma}$. The previous result is also proved in  \cite[Theorem~4.6 and 4.10]{Heintze} for the compact case. $\diamond$

\end{enumerate}
\end{example}

All these considerations also hold for two-point homogeneous space. In this way, the polar geodesics coordinates on these spaces can be understood as the ones given by the orbits of an action and the transversal 1-dimensional submanifold.


\subsection{The Laplace--Beltrami operator for radial functions}

Here we use the splitting granted by polar action to describe the Laplace-Beltrami operator on manifolds. To exemplify such a procedure, we recall a well-known situation,  the Laplace operator in  the case of Cartesian coordinates $(\R ^2,x,y)$ and in polar coordinates $(\R^2,r,\theta)$: 
$$
\Delta u=\dfrac{\partial^2u}{\partial x^2}+\dfrac{\partial^2u}{\partial y^2}=\dfrac{\partial^2u}{\partial r^2}+ \dfrac{1}{r}\dfrac{\partial u}{\partial r}+\dfrac{1}{r^2}\dfrac{\partial^2 u}{\partial ^2 \theta},
$$
and under the assumption that $u$ is a radial function, we have the following:
$$\Delta u=u''+\frac{1}{r}u'.$$
It is important to highlight that this only works out of the origin because at $r=0$, the Laplace--Beltrami operator has a singularity.\\ 
To implement the previous idea in more general settings, we begin with the description of the Laplace-Beltrami operator for Riemann manifolds $M$. If we denote by $\bar{g}=\det(g_{ij})$ for some coordinate chart, the Laplace-Beltrami operator acting on a smooth function
$u:M\to \mathbb{R}$ is defined as  
$$\Delta_Mu=\frac{1}{\sqrt{\bar{g}}}\sum_{k}\frac{\partial}{\partial x_k}\left( \sum_{i} g^{ik}\sqrt{\bar{g}}\frac{\partial}{\partial x_i}u \right)$$ where $(g^{ij}):=(g_{ij})^{-1}$ and $\bar{g}=$det$(g_{ij})$. For the coordinate-free expression of $\Delta_Mu$ and other properties of this operator, we refer to \cite[Section 2.1]{Jo}.

The main advantage is that for manifolds that admit polar actions, the strategy presented in \cite{HelgasonIII} gives a general decomposition of the Laplace-Beltrami operator via its radial part, i.e., the function $u$ does not depend on the orbit direction. In \cite[Section 3.3]{BecerraGalvisMartinez2019},  there is an extended summary of the construction in \cite{HelgasonIII}, and in this manuscript, we just mention the main results needed to simplify the Laplace--Beltrami operator for radial functions.

\begin{theorem}[\cite{HelgasonIII}, Proposition 2.1]\label{teoradialpart}
Let us suppose that $\Sigma$ is transversal to the action of $G$. Let $D$ be a differential operator on $M$. There is a unique operator $\Delta(D)$ on $\Sigma$ such that 
$D(u)|_{\Sigma}=\Delta(D)(u|_{\Sigma})$, for each $G$-invariant function $u$ defined on an open subset of $M$. 
\end{theorem}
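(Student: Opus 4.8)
\textbf{Proof plan for Theorem~\ref{teoradialpart}.}

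The plan is to work locally and exploit the transversality hypothesis $T_xM = T_x\Sigma \oplus T_x(G\cdot x)$ to build adapted coordinates in which $G$-invariant functions become exactly the functions depending only on the $\Sigma$-coordinates, and then to read off the restriction of $D$ to such functions. First I would fix an interior point $p \in \Sigma$ with non-trivial orbit and choose a slice: local coordinates $(s^1,\dots,s^k)$ on $\Sigma$ near $p$ together with coordinates $(t^1,\dots,t^{m})$ transverse to $\Sigma$ obtained from a local cross-section of the orbit foliation, so that the orbit through a point is (locally) a level set of the $s$-variables and $\Sigma$ is the locus $t = 0$. Transversality guarantees that $(s,t)$ is a genuine coordinate chart on a neighborhood $U$ of $p$ in $M$. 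In these coordinates a $G$-invariant function $u$ (defined on a $G$-saturated open set) restricted to $U$ is, after shrinking $U$, independent of $t$: indeed $u$ is constant along each orbit, and near $p$ the orbits sweep out the $t$-directions. Conversely any function of $s$ alone extends to a $G$-invariant function near $p$.

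Next I would apply the operator $D$ to such a $u$ and restrict to $\Sigma$. Writing $D = \sum a_\alpha(s,t)\, \partial^\alpha$ in the $(s,t)$-coordinates (a finite sum over multi-indices in both $s$ and $t$ variables), and using that $u$ depends only on $s$ near $\Sigma$, every term involving a $t$-derivative annihilates $u$, so $D(u)(s,0)$ is a finite-order linear differential expression in the $s$-derivatives of $u|_\Sigma$ with coefficients $a_\alpha(s,0)$ taken from the pure-$s$ multi-indices $\alpha$. This defines, near $p$, a differential operator on $\Sigma$; call it $\Delta_U(D)$. For \emph{uniqueness}: if $\widetilde{\Delta}$ were another operator on $\Sigma$ with $\widetilde{\Delta}(v) = D(u)|_\Sigma$ for every $G$-invariant $u$ with $u|_\Sigma = v$, then $(\Delta_U(D) - \widetilde{\Delta})(v) = 0$ for all such $v$, hence for all smooth $v$ on $\Sigma$ near $p$ (every $v$ is realized this way by the converse above), which forces $\Delta_U(D) = \widetilde{\Delta}$ as differential operators. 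Finally, the local operators $\Delta_U(D)$ and $\Delta_V(D)$ agree on the overlap of two such charts, again because they must both equal $D(u)|_\Sigma$ on the overlap for every $G$-invariant $u$; by the uniqueness just proved they coincide, so the locally defined operators patch to a globally well-defined operator $\Delta(D)$ on $\Sigma$.

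The step I expect to require the most care is the claim that a $G$-invariant function, restricted to the adapted chart, genuinely depends only on the $s$-coordinates, and the dual claim that every smooth function on $\Sigma$ near $p$ arises as the restriction of some $G$-invariant function. The first is essentially the statement that the orbits through points near $p$ fill out whole $t$-coordinate patches (a consequence of transversality plus continuity of the action, after possibly shrinking $U$ so that the orbit pieces stay inside $U$); the second can be handled by extending $v$ to $U$ as a function constant in $t$ and then, if one wants a $G$-invariant extension on a $G$-saturated set, transporting it by the action — one only needs this on a neighborhood, which is enough since the theorem quantifies over functions defined on arbitrary open subsets. The remaining manipulations (expressing $D$ in coordinates, discarding $t$-derivative terms, checking that what is left is a bona fide differential operator on $\Sigma$) are routine. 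One should also note that the statement and this argument are purely local near $\Sigma$, so no completeness or global structure of the action is needed beyond [G1]; this is exactly how the corollary on reducing \eqref{EP1} to an ODE will be extracted when $\dim \Sigma = 1$.
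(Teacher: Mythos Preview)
The paper does not give its own proof of this theorem; it is quoted directly from Helgason (\cite{HelgasonIII}, Proposition~2.1) and used as a black box, so there is no ``paper's proof'' to compare against. Your plan is essentially the standard slice-coordinate argument that Helgason himself uses: build adapted coordinates from transversality, observe that $G$-invariant functions depend only on the $\Sigma$-coordinates, and read off the radial part by discarding the orbit-direction derivatives, with uniqueness forced because every smooth germ on $\Sigma$ arises as such a restriction.

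The argument is sound, and you have correctly identified the only point that needs care: that in the adapted chart the orbit pieces really sweep out the $t$-directions (so $G$-invariance forces $\partial_t u = 0$ locally), and the converse extension of an arbitrary $v$ on $\Sigma$ to a locally invariant $u$. Both follow from the transversality $T_xM = T_x\Sigma \oplus T_x(G\cdot x)$ together with the smoothness of the action, exactly as you sketch. One minor remark: the theorem as stated allows $u$ defined on an arbitrary open subset, not necessarily $G$-saturated, so ``$G$-invariant'' here should be read as ``locally constant on the orbit foliation,'' which is what your coordinate argument actually uses; this is harmless but worth making explicit.
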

The operator $\Delta(D)$ is called the \emph{radial part} of $D$. If we denote $L$ as the Laplace-Beltrami operator\footnote{When we need to specify the operator on a particular manifold, we denote it with a subindex, denoting the manifold where is computed.}, we have the particular situation as follows: \\
\begin{theorem}[\cite{HelgasonIII}, Theorem 2.11]\label{t8} For any submanifold $\Sigma\subset M$, transversal to the action an unimodular compact Lie group $G$ without fixed points, we have
\begin{equation}\label{eq:radial of LB}
\Delta(L_M)=\frac{1}{\sqrt{A(r)}}L_{\Sigma}(\circ\sqrt{A(r)})-\frac{1}{\sqrt{A(r)}}L_{\Sigma}(\sqrt{A(r)}),
\end{equation}
where $A(r)$ denotes the 
Riemannian measure of the orbit $G\cdot r$ for each $r\in \Sigma$, and $\circ$ denotes the composition of the operator $L_{\Sigma}$ and the operator multiplication by $\sqrt{A(r)}$. 
\end{theorem}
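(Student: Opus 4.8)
The plan is to derive the identity \eqref{eq:radial of LB} by combining Theorem~\ref{teoradialpart} (existence and uniqueness of the radial part) with an explicit conjugation of $L_M$ against the orbit-volume density. The starting observation is that, since $G$ acts by isometries, the Riemannian measure $dV_M$ locally splits, along the transversal $\Sigma$, as (a positive density on $\Sigma$) times (the Riemannian measure on the orbit of total mass $A(r)$). Concretely, in a tube of coordinates adapted to the slice $\Sigma$ and the orbit directions, $\sqrt{\bar g}$ factors as a function on $\Sigma$ times $A(r)$ times the fixed volume density of a model orbit; the unimodularity of $G$ is precisely what guarantees that this factorization is consistent (the orbit density is $G$-invariant and integrates to $A(r)$), so no Jacobian correction from the group multiplication appears.

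First I would fix a point $p\in\Sigma$ with $G\cdot p\neq\{p\}$ and choose a $G$-slice chart: coordinates $(r,\omega)$ with $r$ ranging over $\Sigma$ (one-dimensional in our case, so a single variable) and $\omega$ coordinates along the orbit, chosen so that $\Sigma = \{\omega = \text{const}\}$ is orthogonal to the orbits at $p$ by [G1]. Then I would write $L_M$ in these coordinates using the divergence-form expression $L_M u = \tfrac{1}{\sqrt{\bar g}}\partial_k(g^{ik}\sqrt{\bar g}\,\partial_i u)$, and restrict attention to $G$-invariant $u$, i.e.\ $u = u(r)$. For such $u$ the $\omega$-derivatives vanish, and transversality plus the isometric action kill the mixed terms $g^{r\omega}$ along $\Sigma$, so only the purely radial piece survives: $L_M u|_\Sigma = \tfrac{1}{\sqrt{\bar g}}\,\partial_r\!\big(g^{rr}\sqrt{\bar g}\,\partial_r u\big)\big|_\Sigma$. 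Using $\sqrt{\bar g} = \phi(r)\,A(r)\,\mu(\omega)$ for a positive $\phi$ (absorbable into the arclength normalization on $\Sigma$) and comparing with the one-dimensional operator $L_\Sigma v = \tfrac{1}{\sqrt{\bar g_\Sigma}}\partial_r(g^{rr}\sqrt{\bar g_\Sigma}\,\partial_r v)$, the factor $A(r)$ is the only discrepancy between $\bar g$ and $\bar g_\Sigma$; so $\Delta(L_M)$ acts on $v = u|_\Sigma$ by $\tfrac{1}{A}\,\partial_r(g^{rr}A\sqrt{\bar g_\Sigma}\,\partial_r v)/\sqrt{\bar g_\Sigma}$, which is exactly $A^{-1/2}\,L_\Sigma(\sqrt{A}\,\cdot)$ minus the zeroth-order term $A^{-1/2}L_\Sigma(\sqrt A)$ after the standard algebraic manipulation of pulling $\sqrt A$ through $L_\Sigma$ (the ``ground-state transform''): $A^{-1/2}L_\Sigma(\sqrt A\, v) = L_\Sigma v + (\text{first order in }v) + v\,A^{-1/2}L_\Sigma(\sqrt A)$, and one checks the first-order terms organize into the claimed conjugated operator $\tfrac{1}{\sqrt A}L_\Sigma(\circ\sqrt A)$. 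Finally, uniqueness in Theorem~\ref{teoradialpart} promotes this local coordinate identity to the global operator identity on $\Sigma$.

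I expect the main obstacle to be bookkeeping the metric coefficients cleanly: verifying that in the slice chart one genuinely has $\sqrt{\bar g} = (\text{density on }\Sigma)\cdot A(r)\cdot(\text{fixed orbit density})$ with no leftover cross-term contribution to the radial operator, and that the $g^{rr}$ appearing is the metric induced on $\Sigma$ (this is where orthogonality of the slice in [G1] is used, and it must be checked that it suffices to have orthogonality along $\Sigma$, not on a full neighborhood). The unimodularity hypothesis enters exactly here, ensuring the orbit density is bi-invariant so that $A(r)$ is well defined as the orbit mass and the factorization is $\omega$-independent up to the fixed model density $\mu(\omega)$. The ``ground-state transform'' algebra at the end is routine once the geometry is pinned down. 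Since the theorem is quoted verbatim from \cite{HelgasonIII}, an alternative—and arguably cleaner—route is simply to invoke that reference and supply only the short verification that our hypotheses ([G1], unimodularity, finite smooth $A$) match its assumptions; I would present the coordinate computation as the self-contained proof and remark that it reproduces \cite[Theorem~2.11]{HelgasonIII}.
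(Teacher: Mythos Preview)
The paper does not supply its own proof of this theorem: it is stated with attribution to \cite[Theorem~2.11]{HelgasonIII} and then used as a black box (the subsequent text simply records remarks and the corollary \eqref{eq:EDO from EP1}). So there is nothing in the paper to compare your argument against; the paper's ``proof'' is exactly the alternative you mention at the end of your proposal---invoke the reference.

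That said, your coordinate sketch is a correct outline of the standard derivation. The one-dimensional identity you are aiming at, $\tfrac{1}{A}(A v')' = \tfrac{1}{\sqrt A}(\sqrt A\, v)'' - \tfrac{v}{\sqrt A}(\sqrt A)''$, is an elementary check, and your identification of the delicate point (that the mixed terms $g^{r\omega}$ vanish only along $\Sigma$, not on a full tube, so one must argue carefully that the $\partial_\omega$-block of the divergence contributes nothing when restricted to $\Sigma$) is exactly right. Helgason's own argument sidesteps this by using the self-adjointness of $L_M$ with respect to $dV_M$ and an invariant-integration identity rather than bare coordinates, which is why unimodularity enters cleanly there; your approach would work too but needs that extra orthogonality bookkeeping you flagged. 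For the purposes of this paper, simply citing \cite{HelgasonIII} (as the authors do) is the intended route.
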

From the previous theorem, it is worth making a couple of remarks:
\begin{itemize}

\item An interesting observation is that radial solutions of problem \eqref{EP1} depend on the geometry of the orbits rather than on the whole geometry. We have examples of various types of geometries (e.g., two-point homogeneous spaces with different types of curvatures) for which existence and uniqueness results hold.

    \item We observe that, in the case of exponential or polar coordinates, the orbit $G\cdot r$ coincides with the geodesic ``sphere'' of geodesic radius $r$.
    \item Note that the fixed points of the action introduce singularities in the Laplace--Beltrami operator, as it was mentioned in Remark \ref{rmk:bhv of A}, where different examples of $A$ are provided.
\end{itemize}

Applying the previous theorem to (local) radial functions,  where $x\mapsto f(x,\cdot)$ is also radial, a  straightforward computation on problem \eqref{EP1} yields
\begin{equation}\label{eqdimg}
\Delta_\Sigma u+2\dfrac{\nabla_\Sigma \sqrt{A(r)}\cdot \nabla_\Sigma u}{\sqrt{A(r)}}+f(r,u(r))=0.
\end{equation}
Equivalently, it can be written as
\begin{equation}
\Delta_\Sigma u+{\nabla_\Sigma \ln{A(r)}\cdot 
\nabla_\Sigma u}+f(r,u(r))=0
\end{equation}
on non-fixed points. Nevertheless, we also consider another simplification:  We assume now the existence of a one-dimensional transverse submanifold $\Sigma$. Same computation as before, but for a one-variable function $u$ yields
\begin{align*}L_M(u)&=\frac{2\nabla(\sqrt{A(r)})\cdot\nabla(u)}{\sqrt{A(r)}}+L_{\Sigma}(u)\\
&=\frac{(A(r))'u'}{A(r)}+u''=(\ln(A))'u'+u''.
\end{align*}

Summarizing, we have deduced the following key result.

\begin{corollary}\label{cor:reduction to ODE} Under the assumptions of Theorem  \ref{t8}, and given $\Sigma $ with dimension one, we have
$
L_M(u)=(\ln(A))'u'+u''$. Consequently, if $x\mapsto f(x,\cdot)$ is radial, problem \eqref{EP1} for radial functions is equivalent to
\begin{equation}\label{eq:EDO from EP1}
u''+(\ln(A))'u'+f(r,u(r))=0    
\end{equation}
defined on $\Sigma$.
\end{corollary}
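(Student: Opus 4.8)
The plan is to derive the formula $L_M(u)=(\ln A)'u'+u''$ directly from Theorem~\ref{t8} by specializing equation~\eqref{eq:radial of LB} to a one-dimensional transversal submanifold $\Sigma$, and then to substitute this into \eqref{EP1} to obtain the stated ODE. The key observation is that on a one-dimensional Riemannian manifold parametrized by arclength $r$, the Laplace--Beltrami operator $L_\Sigma$ is simply the second derivative $\partial_r^2$, the gradient $\nabla_\Sigma$ is $\partial_r$, and the inner product of gradients is just the product of derivatives. So the whole computation reduces to an exercise in one-variable calculus once the right form of the radial part is in hand.

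Concretely, I would proceed as follows. First, recall from Theorem~\ref{t8} that $\Delta(L_M)=\frac{1}{\sqrt{A(r)}}L_\Sigma(\circ\sqrt{A(r)})-\frac{1}{\sqrt{A(r)}}L_\Sigma(\sqrt{A(r)})$. Expanding the composition operator $L_\Sigma(\circ\sqrt{A(r)})$ applied to $u$ means applying $L_\Sigma$ to the product $\sqrt{A(r)}\,u$. Using the product rule for the one-dimensional Laplacian (which is the second derivative), write
\begin{equation*}
L_\Sigma(\sqrt{A}\,u)=(\sqrt{A})''u+2(\sqrt{A})'u'+\sqrt{A}\,u''.
\end{equation*}
Dividing by $\sqrt{A}$ and subtracting the term $\frac{1}{\sqrt{A}}L_\Sigma(\sqrt{A})=\frac{(\sqrt{A})''}{\sqrt{A}}$, the two copies of $(\sqrt{A})''$ cancel, leaving
\begin{equation*}
\Delta(L_M)(u)=u''+2\frac{(\sqrt{A})'}{\sqrt{A}}u'.
\end{equation*}
Then I would simplify the coefficient: since $(\sqrt{A})'=\frac{A'}{2\sqrt{A}}$, we get $2\frac{(\sqrt{A})'}{\sqrt{A}}=\frac{A'}{A}=(\ln A)'$, which yields $L_M(u)=u''+(\ln A)'u'$ as claimed. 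This matches the intermediate computation already displayed in the excerpt just before the statement, so I would either cite that line or reproduce it compactly.

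For the second half of the corollary, I would invoke Theorem~\ref{teoradialpart}: since $u$ is a (local) radial function and $x\mapsto f(x,\cdot)$ is radial, the restriction of $-\Delta_M u+f(x,u)$ to $\Sigma$ equals $-\Delta(L_M)(u|_\Sigma)+f(r,u(r))$, and this restricted expression vanishes if and only if the original PDE holds on the (open, dense-in-the-relevant-sense) set of non-fixed points swept out by the $G$-orbits through $\Sigma$. Substituting the formula for $\Delta(L_M)$ gives exactly $u''+(\ln A)'u'+f(r,u(r))=0$ on $\Sigma$, which is \eqref{eq:EDO from EP1}. I would note that the equivalence is genuine in both directions because a radial function on $M_0$ is completely determined by its restriction to $\Sigma$, and Theorem~\ref{teoradialpart} guarantees the radial part captures the operator exactly on radial functions.

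The only real subtlety — and the step I would be most careful about — is the bookkeeping around the arclength parametrization and the precise domain of validity. Theorem~\ref{t8} is stated for actions without fixed points, so the identity lives on $M_0$ (equivalently, on $\Sigma$ as the set of non-fixed points), not on all of $M$; I would make this explicit to avoid the singularity issue flagged repeatedly in the text. I should also confirm that ``$\Sigma$ one-dimensional and parametrized by $r$'' legitimately makes $L_\Sigma=\partial_r^2$ — this is immediate for the arclength parameter, where the metric is $g_{11}=1$, $\bar g=1$, so the coordinate formula for $\Delta_M$ collapses to $\partial_r^2$. Beyond that, the argument is a short direct calculation; there is no genuine obstacle, only the need to state hypotheses cleanly.
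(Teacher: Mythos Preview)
Your proposal is correct and follows essentially the same route as the paper: both expand the radial-part formula \eqref{eq:radial of LB} from Theorem~\ref{t8}, use that on a one-dimensional $\Sigma$ parametrized by arclength $L_\Sigma=\partial_r^2$, and simplify $2(\sqrt{A})'/\sqrt{A}=A'/A=(\ln A)'$. The paper phrases the intermediate step via the gradient form $2\nabla_\Sigma\sqrt{A}\cdot\nabla_\Sigma u/\sqrt{A}+L_\Sigma u$ (equation \eqref{eqdimg}) before specializing to one dimension, whereas you go directly to the product rule for second derivatives, but the computation is identical.
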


\begin{rmk}

Recall that $u'$ means the derivative of a curve on a Euclidean space, but in general, $\Sigma$ is not a line (or open set of a line), and the notion of $u'$ as a limit of the usual differential quotient makes no sense. However, we still can give enough sense in the context of smooth manifolds as follows: As the function $u:\Sigma\to \mathbb{R}$ is locally given by $u:(-\epsilon,\epsilon)\to \mathbb{R}$, then we have
$$d_tu:T_t(-\epsilon,\epsilon)\to T_{u(t)}\mathbb{R},$$
and recall that the vector space $T_t(-\epsilon,\epsilon)$ is generated by a tangent vector $\partial_t$ from the local chart $((-\epsilon,\epsilon),t)$, then $u'=d_tu(\partial_t)\in T_{u(t)}\mathbb{R}=\mathbb{R}$.

\end{rmk}

\section{Proof of existence results}\label{sec:existe}

In this section, we apply the different assumptions on $\tilde{\Sigma}$, $\Sigma$, $A$, and the nonlinear term $f$ to deduce the existence of solutions for problem \eqref{EP1}. In addition, we exhibit a result of non-existence of positive solutions in Proposition \ref{propnonexis}. The key idea to finding radial solutions is to use conditions [G1(a)] and [G1(b)] in the problem \eqref{EP1} to arrive at the equivalent problem
\begin{equation}\label{EP2}
\begin{aligned}
   &(A(r)u'(r))'+A(r)f(r,u(r)) =0,
 \end{aligned}   
\end{equation}
and apply different results from the theory of ODEs.

Let us begin with the deduction of Theorem \ref{TheoMainExis}, which we will divide into parts i)-v).

\begin{proof}[Proof of Theorem \ref{TheoMainExis} part i)]
As commented above, it is enough to study the  ODE in problem \eqref{EP2}, which is well-posed on $\Sigma$. Thus, we obtain the local existence of solutions from a standard application of Picard-Lindel\"of theorem. Finally, extending $u$ by $G$-symmetries, i.e., $u(g\cdot p):=u(p)$, we get a local radial solution of \eqref{EP1}.
\end{proof}

Before continuing with the proof of Theorem \ref{TheoMainExis}, we apply condition [G2] to further simplify equation \eqref{EP2}. Since the function $A(r)$ is nonnegative, we define $s=J(r)=\int_{r_0}^r A(t)^{-1}\, dt$ with $r_0 \in  (0,\mbox{lenght}(\Sigma))$, then 
\begin{equation*}
  \begin{aligned}
     \frac{dr}{ds}=A(r), \quad \text{and } \quad \frac{dz}{ds}=u'(r)A(r), 
  \end{aligned}  
\end{equation*}
where
\begin{equation*}
    z(s)=u(J^{-1}(s)).
\end{equation*}
It follow that equation \eqref{EP2} takes the form 
\begin{equation}\label{ODEeq}
    \begin{aligned}
    z''(s)+A(r(s))^2f(r(s),z(s))=0.
\end{aligned}
\end{equation}

\begin{proof}[Proof of Theorem \ref{TheoMainExis} part ii)]
We first note that since $A(r(s))^2$ is a function with continuous derivative, setting $\widetilde{f}(s,y)=A(r(s))^2f(r(s),y)$ with $s\in (c_1,c_2)$, $y\in \mathbb{R}$, it follows that $\widetilde{f}$ also satisfies [F1] and [F2] but on the set $(s,y)\in (c_1,c_2)\times \mathbb{R}$. Consequently, we can apply classical global existence theorems for ODE (see for example \cite{Sideris2013,Gerald2012}) to deduce that for any initial conditions in $(c_1,c_2)$, there exists a solution $z(s)$ of class $C^2(c_1,c_2)$ of problem \eqref{ODEeq}. Now, extending $u(r)$ by $G$-symmetries to whole $M_0$, $u$ gives a maximal radial solution of \eqref{EP1}.
\end{proof}

\begin{proof}[Proof of Theorem \ref{TheoMainExis} part iii)]
As before, we can reduce the existence problem to that of \eqref{ODEeq}, where we assume $-\infty<c_1<c_2<\infty$. If we make the change of variables $s=(c_2-c_1)t+c_1$ with $t\in (0,1)$, and consider $w(t)=z(s(t))$ on\eqref{ODEeq}, we have 
\begin{equation}
  \frac{d^2 w}{dt^2}+\widetilde{A}(t)^2\widetilde{f}(t,w(t))=0,  
\end{equation}
where $\widetilde{A}(t)=(c_2-c_1)A(r(s(t)))$, and $\widetilde{f}(t,w(t))=f(r(s(t)),w(t))$. With this change of variables, the conditions [A1], [F3], [F4] and [F5] in terms of $\widetilde{A}$ and $\widetilde{f}$ now read as follows:
\begin{itemize}
    \item[{[A'1]}] $\widetilde{A}\in C([0,1])$ with $\widetilde{A}>0$ in $(0,1)$.
        \item[{[F'3]}] $\widetilde{f}: (0,1)\times (0,\infty)\rightarrow (0,\infty)$ is continuous, $y \mapsto \widetilde{f}(t,y)$ is decreasing for each $t\in (0,1)$, and $t \mapsto \widetilde{f}(t,y)$ is integrable for each $y$.  
     \item[{[F'4]}] We have $$\lim_{y\to 0^{+}}\widetilde{f}(t,y)=\infty \, \, \text{and } \, \, \lim_{y \to \infty}\widetilde{f}(t,y)=0,$$ 
     uniformly on compact subsets of $(0,1)$.
     \item[{[F'5]}] It follows
     \begin{equation*}
         0<\int_{0}^{1} \widetilde{A}(t)^2\widetilde{f}\big(t,g_{\theta}(t)\big)\, dt, 
     \end{equation*}
     for all $\theta>0$, where $g_\theta (t):= \theta\min\{t, (1-t)\}$.
\end{itemize}
Given $\alpha,\gamma, \beta,\delta \geq 0$ such that $\gamma \beta+\alpha \gamma+\alpha\delta>0$, we define the boundary value problem
\begin{equation}\label{BBP1}
\left\{\begin{aligned}
&\frac{d^2 w}{dt^2}+\widetilde{A}(t)^2\widetilde{f}(t,w(t))=0,\quad t\in (0,1), \\
&\alpha w(0)-\beta w'(0)=0, \\
&\gamma w(1)+\delta w'(1)=0.
\end{aligned}\right.    
\end{equation}
It follows from [A'1], [F'3], [F'4] and [F'5] that we can apply the results in \cite[Theorem 2.2]{GaticaOlikerWaltman1989} to obtain that \eqref{BBP1} has a positive solution $w$. Consequently, by reversing the change of variables, we obtain a maximal solution $u$ of \eqref{EP1} with the desired properties.
\end{proof}

 We continue with the study of the case $\Sigma$ being parameterized by the interval $(a,\infty)$. 

\begin{proof}[Proof of Theorem \ref{TheoMainExis} iv)]
We first consider the change of variables $r=t+a$, mapping $t\in [0,\infty)$ into $s\in [a,\infty)$. Next, we set $w(t)=u(r(t))$, $\widetilde{A}(t)=A(r(t))$, $\widetilde{f}:(0,\infty)\times (0,\infty) \times (0,\infty)\rightarrow \mathbb{R}$ be given by $\widetilde{f}(t,y,z)=f(r(t),y)$, and $\widetilde{\rho}(t)=\rho(r(t))=\int_0^t \widetilde{A}(t')^{-1}\, dt'$. Using equation \eqref{EP2}, we look for solutions of problem
\begin{equation}\label{CPB2}
\left\{\begin{aligned}
&\frac{1}{\widetilde{A}}\Big(\widetilde{A} w'\Big)'+\widetilde{f}(t,w,\widetilde{A}w')=0,\quad t\in (0,\infty), \\
&w>0, \quad  \text{ in } (0,\infty), \\
&\lim_{t\to 0^{+}} w(t)=0.
\end{aligned}\right.    
\end{equation}
Conditions [A2] and [F6] imply that $\widetilde{A}$ and $\widetilde{f}$ satisfy the hypothesis in \cite[Theorem 1]{MaagliMasmoudi2001} (see also \cite{BacharMaagli2014,Zhao1994}). Hence, for such a result, there exists a solution $w$ of \eqref{CPB2} in the class $C([0,\infty))\cap C^1((0,\infty))$, from which $u(r)=w(r-a)$ and $G$ action yield a $C^1$ maximal radial solution of  \eqref{EP1}. Moreover, by the results in \cite[Theorem 1]{MaagliMasmoudi2001}, there exists some $c>0$ such that
\begin{equation*}
        u(r)=c\rho(r)+\int_a^{\infty} A(t)\rho(g_1(r,t))\big(1-\frac{\rho(g_2(r,t))}{\rho(\infty)}\big) f(t,u(t))\, dt,
    \end{equation*}
and
 \begin{equation*}
        \lim_{r\to a^{+}} u(r)=0, \qquad \lim_{r\to \infty}\frac{u(r)}{\rho(r)}=c,
\end{equation*}
where $\rho(\infty)=\lim_{r\to \infty}\rho(r)$, $g_1(r,t)=\min\{r,t\}$, and  $g_2(r,t)=\max\{r,t\}$. As before, by $G$-extension of $u$, we get the desired result.
\end{proof}
Using the results in \cite{Bachar2005}, we can find solutions of \eqref{EP1} in the particular case $f(x,y)=b(x)y^{-\sigma}$, $\sigma>0$. 

\begin{proof}[Proof of Theorem \ref{TheoMainExis} v)] We consider again the linear change of variable $r=t+a$, mapping $[0,\infty)$ into $[a,\infty)$. Changing to polar coordinates, we set $w(t)=u(r(t))$, $\widetilde{A}(t)=A(r(t))$, $\widetilde{\rho}(t)=\rho(r(t))$, with $\rho(r)$ as in \eqref{rhodef},  and we define $\varphi(t,y)=b(r(t))y^{-\sigma}$. Hence, from \eqref{EP2}, we consider problem
\begin{equation}\label{CPB3}
\left\{\begin{aligned}
&\frac{1}{\widetilde{A}}\Big(\widetilde{A} w'\Big)'+\varphi(t,w(t))=0,\quad t\in (0,\infty), \\
&w>0, \quad  \text{ in } (0,\infty), \\
&\lim_{t\to 0^{+}} w(t)=0.
\end{aligned}\right.    
\end{equation}
Our assumptions in v) Theorem \ref{TheoMainExis}  imply that $\varphi$ satisfies conditions (H1), (H2) and (H3) in \cite[Theorem 2]{Bachar2005}, from which we obtain a solution  $w\in C([0,\infty))\cap  C^1((0,\infty))$ of \eqref{CPB3}. Consequently, setting $u(r)=w(r-a)$, we get a $C^2$ maximal radial solution \eqref{EP1}.  In addition, reversing variables, the results in \cite{Bachar2005} imply that in polar coordinates
    \begin{equation*}
    u(r)=\int_a^{\infty}A(t)\rho(\min\{r, t\})b(t)u(t)^{-\sigma}\, dt,
    \end{equation*}
$u$ is positive, and
\begin{equation*}
    \lim_{r\to \infty} \frac{u(r)}{\rho(r)}=0.
\end{equation*}

\end{proof}

\begin{rmk}\label{otherexam}
To show the existence of maximal solutions in the case $f(x,u)=b(x)u^{\sigma}$, $\sigma>0$ in \eqref{EP1} and $b$ being radial continuous function, we can use similar ideas to those given in the proof of Theorem \ref{TheoMainExis}, together with the result in \cite[Theorem 1.5]{BacharMaagli2014}. Analogously, we can consider the results in \cite{BacharMaagli2016} to show existence of solutions of \eqref{EP1} in the combined case $f(x,u)=b_1(x)u^{\sigma_1}+b_2(x)u^{\sigma_2}$. For brevity, we will not state these results in this manuscript.      
\end{rmk}

We conclude this section with the nonexistence results contained in Proposition \ref{propnonexis}.

\begin{proof}[Proof of Proposition \ref{propnonexis}]
Let $u$ be a nonnegative $C^2$ maximal radial solution of \eqref{EP1}. By [G2], we have that for some $r_0$, the change of variables $s=J(r)=\int_{r_0}^r A(t)^{-1}\, dt$ and the fact that $u$ is a maximal solutions of \eqref{EP1} imply that $z(s)=u(J^{-1}(s))$ is a nonnegative solution of equation \eqref{ODEeq} in all $\mathbb{R}$ (here we use the assumption $c_1=-\infty$, $c_2=\infty$). Since $A(r(s))^2\geq 0$, and $f\geq 0$, it follows
\begin{equation*}
   z''(s)=-A(r(s))^2f(r(s),z(s))\leq 0,
\end{equation*}
for all $s\in \mathbb{R}$. Hence, setting $n=1$, and $m=2$ in \cite[Theorem II (a)]{SerrinZou2002}, we deduce that $z$ is constant, and in turn, $u$ is constant as desired. 
\end{proof}

 \subsection{Some additional geometric constructions}\label{AddGeconstr}

The previous section deals with analytical assumptions for the existence and extension of radial solutions to the problem \eqref{EP1}. As we have a geometrical setting, we want to show that we can also use pure geometric considerations to obtain some existence results.

We begin with the existence results from the geometric assumption on $\Sigma$. More precisely, consider the canonical change of variable $\phi_1=u, \phi_2=u'$ which, together with \eqref{EP2} yield the following system of ODEs
\begin{equation}\label{eq:SystemODE}
    \begin{cases}
\phi_1'(r)=\phi_2(r),& \\
A(r)\phi_2'(r)=-A'(r)\phi_2-A(r)f(r,\phi_1(r)).& 
\end{cases}
\end{equation}
We have that \eqref{eq:SystemODE} is associated with a vector field over $\Sigma \times \Sigma$, where $ \Sigma$ is a submanifold as in either [G1(a)] or [G1(b)]. The idea is to show that the vector field associated with \eqref{eq:SystemODE} is complete, i.e., it is defined in all $\Sigma \times \Sigma$. Following \cite[Theorem 2.4]{GliklikhMorozova2004}, it is enough to guarantee the existence of a proper function $\varphi:\R\times \Sigma \times \Sigma\to \mathbb{R}$, $\varphi=\varphi(t,x_1,x_2)$, and a constant $C$ so that 

\begin{equation}\label{cond:v.f.}
    \Big|\frac{\partial \varphi}{\partial t}\Big|\leq C, \quad \Big|x_2\frac{\partial \varphi}{\partial x_1}\Big|\leq C\quad \mbox{ and } \quad \Big|(x_2A'+Af)\frac{\partial \varphi}{\partial x_2}\Big|\leq C A(r),
\end{equation}
where $(x_1,x_2)$ is the local chart in $\Sigma\times \Sigma$. 

\begin{proposition}
Let $M$ be a $n$-dimensional Riemannian manifold satisfying condition [G1] for a 1-dimensional submanifold $\tilde{\Sigma}$ and an unimodular Lie group $G$, so that $\Sigma$ satisfies either conditions [G1(a)] or [G1(b)]. If in addition there exists $\varphi:\R \times \Sigma\to \R$   satisfying conditions in \eqref{cond:v.f.}, then there existsa  radial solution of problem \eqref{EP1} in $M_0$. 
\end{proposition}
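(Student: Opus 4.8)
The plan is to reduce everything, as in the rest of the paper, to the one–dimensional picture and then invoke the completeness criterion that has already been set up. First I would recall, via Corollary~\ref{cor:reduction to ODE} together with conditions [G1(a)]/[G1(b)], that a radial solution of \eqref{EP1} on $M_0$ corresponds exactly to a solution of the system \eqref{eq:SystemODE} on $\Sigma\times\Sigma$, obtained by the substitution $\phi_1=u$, $\phi_2=u'$; extending by the $G$-action then recovers a genuine radial function on $M_0$. So it suffices to produce a solution of \eqref{eq:SystemODE} defined on all of $\Sigma$ (i.e., an integral curve of the associated vector field that is defined for all parameter values in the domain of the arclength parameter).

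Next I would make explicit the vector field $X$ on $\Sigma\times\Sigma$ determined by \eqref{eq:SystemODE}. Since $A>0$ on $\Sigma$ (as $\Sigma$ consists of non-fixed points, where the orbit measure is strictly positive and smooth by [G1(a)]/[G1(b)]), the system can be written as $\phi_1'=\phi_2$, $\phi_2'=-\bigl(A'(r)\phi_2+A(r)f(r,\phi_1)\bigr)/A(r)$, so $X$ is a well-defined smooth (at least $C^1$, using [F0]) vector field on the manifold $\Sigma\times\Sigma$. By the Picard–Lindel\"of theorem its local flow exists; the only thing preventing a global (complete) flow is escape to the boundary of $\Sigma\times\Sigma$ in finite time.

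I would then invoke the criterion of \cite[Theorem 2.4]{GliklikhMorozova2004}: the existence of a proper function $\varphi:\R\times\Sigma\times\Sigma\to\R$ and a constant $C$ satisfying the three bounds in \eqref{cond:v.f.} guarantees that $X$ is complete. The three inequalities in \eqref{cond:v.f.} are precisely the statements that $\bigl|\tfrac{\partial\varphi}{\partial t}\bigr|$, $\bigl|\tfrac{\partial\varphi}{\partial x_1}\cdot(\text{first component of }X)\bigr|$, and $\bigl|\tfrac{\partial\varphi}{\partial x_2}\cdot(\text{second component of }X)\bigr|$ are all bounded by $C$ — note the $A(r)$ on the right of the third bound cancels the $1/A(r)$ hidden in the second component of $X$. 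Hence along any integral curve, $\tfrac{d}{dt}\varphi(t,\phi_1(t),\phi_2(t))$ is bounded by $3C$ in absolute value, so $\varphi$ grows at most linearly along the flow; since $\varphi$ is proper, the integral curve cannot leave every compact set in finite time, and therefore it is defined for all parameter values. Thus $X$ is complete, \eqref{eq:SystemODE} has a solution on all of $\Sigma$, and extending by the $G$-action produces the desired radial solution of \eqref{EP1} on $M_0$.

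The main obstacle here is not really a difficulty to be overcome in the proof but rather bookkeeping: one must check that the hypothesis \eqref{cond:v.f.} of the proposition is literally the hypothesis of \cite[Theorem 2.4]{GliklikhMorozova2004} when specialized to the vector field \eqref{eq:SystemODE}, and in particular that the factor $A(r)$ appearing on the right-hand side of the third inequality is exactly what is needed to absorb the singular-looking $1/A(r)$ coefficient in the equation for $\phi_2'$. Once that identification is made, the argument is a direct citation plus the observation that $C^1$ regularity of $X$ (from [F0], and smoothness of $A$) suffices to run the cited completeness theorem.
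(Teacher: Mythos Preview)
Your proposal is correct and follows essentially the same route as the paper: verify that the hypotheses \eqref{cond:v.f.} on $\varphi$ are exactly those of \cite[Theorem~2.4]{GliklikhMorozova2004} for the vector field associated with \eqref{eq:SystemODE}, conclude completeness, and extend the resulting global solution on $\Sigma$ to $M_0$ by the $G$-action. One small slip: [F0] only gives continuity of $f$, not $C^1$, so your parenthetical ``at least $C^1$'' is not quite right---but this does not affect the argument, as the cited completeness criterion does not require $C^1$ regularity of the field.
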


\begin{proof}
    Note that from conditions on $\varphi$, we verify the hypothesis in \cite[Theorem 2.4]{GliklikhMorozova2004}, which guarantees that the vector field associated with the problem \eqref{cond:v.f.} is complete. Hence, for any initial condition,  we get a solution of \eqref{eq:SystemODE} on the whole $\Sigma$. Thus, setting $u=\phi_1$, and extending it by symmetries to $M_0$, we obtain a solution of \eqref{EP1}. 
\end{proof}

It is worth recalling that solutions here are in the sense of in Definition \ref{def:solution}, where we cannot guarantee a solution on fixed points of the action as the Laplace--Beltrami operator has singularities on those points. However, in some situations, the solution may extend to fixed points of the action in a smooth way. Let us look at this in more detail.

Firstly, we should note that manifolds with polar action may have different transverse submanifolds defining the manifold $M$ by the action, e.g., $\tilde{\Sigma}_g$ being a translation of the original $g\cdot \tilde{\Sigma}$, for fixed $g\in G$. In this case, both transversal manifolds, $\tilde{\Sigma}$ and $\tilde{\Sigma}_g$, only intersect on fixed points of the action (See Figure \ref{SigmaSphere}, left figure for the case of the sphere). However, this is not the unique situation where there exist different $\Sigma_0$ and $\Sigma_1$ transverse submanifolds for a given polar action on $M$. For example, consider $\Sigma_0$ in the sphere as a meridian from the south pole to the north pole, and $\Sigma_1$ from a point on the equator to its antipodal, passing through one point of $\Sigma_0$ (see Figure \ref{SigmaSphere}, center figure). Both submanifolds are transverse to a suitable $S^1$-rotation leaving two fixed points. Both submanifolds have non-empty intersections; at least they have one point in common. Nevertheless, we could also have the intersection as the north hemisphere of $\Sigma_0$ (see Figure \ref{SigmaSphere} right figure). 

\begin{figure}[ht]
\centering
\includegraphics[scale=0.2]{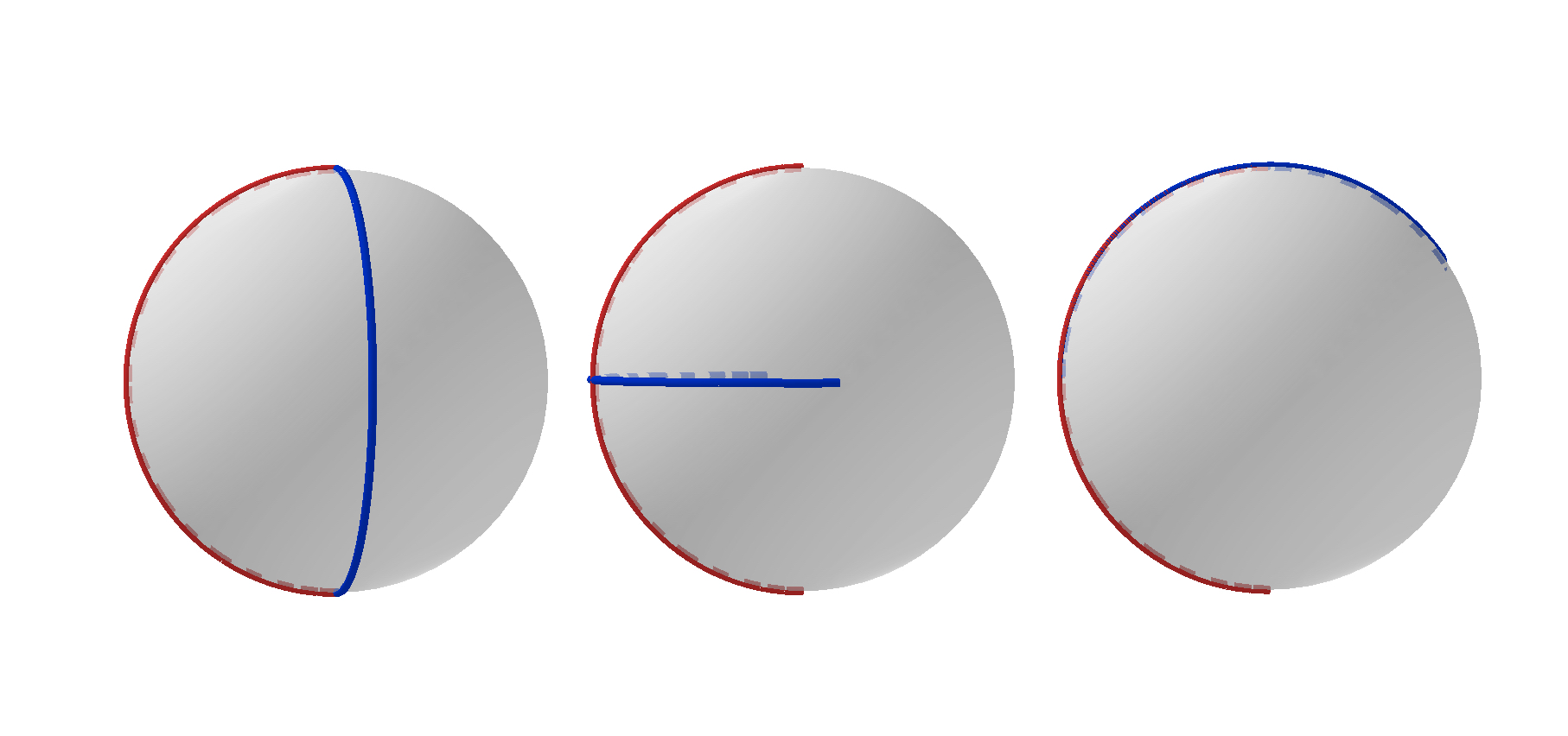}
\caption{\small Situations with non-empty intersection of $\tilde{\Sigma}$ in the sphere.}\label{SigmaSphere}
\end{figure}

Continuing with the above discussion, consider now $u_0$ and $u_1$ solutions of the problem \eqref{eq:EDO from EP1}  in $\Sigma_0$ and $\Sigma_1$ respectively. Only in the case $ \Sigma_0\cap \Sigma_1$ has a non-empty interior, we can construct an initial value problem for which both solutions coincide in the interior of $\Sigma_0\cap \Sigma_1$. In particular, even though the problem \eqref{eq:EDO from EP1} is not defined in the north pole, we obtain a {\it $C^2$ extension} of $u_0$ to the north pole (not just continuous in the north pole). Similarly, we have a $C^2$ extension at the south pole, which finally yields a classical solution of class $C^2$ (extended by the $G$-action) on the whole $M$. This situation could be illustrated in the sphere because the equation \eqref{eq:EDO from EP1} is {\it the same} for $\Sigma_0$ and $\Sigma_1$ as the function $A$  is independent of the center point in the sphere. As mentioned before, this situation is common for any two-point homogeneous space (cf. 4. in Example \ref{ex:polaract} where we show a $\tilde{\Sigma}$ with such properties). Summarizing, we have the following corollary:

\begin{corollary}\label{Corotwo-point homogeneous space}
    Let $M$ be a two-point homogeneous space 
    and $f\in C(M\times \mathbb{R})$  that satisfies conditions [F0], for which  any transverse manifold $\Sigma$ of dimension 1, and any initial conditions $u(q)$, $u'(q)$,  $q\in \Sigma$, there exists a solution $u$ in $C^2(\Sigma)$. If $p_0\in M$ is a fixed point of the polar action with transverse submanifold $\Sigma$, then the problem \eqref{EP1} has a radial $C^2$ solution on the whole $M_0\cup \{p_0\}$.
\end{corollary}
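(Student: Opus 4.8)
\textbf{Proof proposal for Corollary \ref{Corotwo-point homogeneous space}.}

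The plan is to exploit the defining property of a two-point homogeneous space, namely that the Riemann volume function $A(r)$ of geodesic spheres is independent of the center point, so that the reduced ODE \eqref{eq:EDO from EP1} is literally the same ODE regardless of which fixed point we use as the origin of polar coordinates. First I would recall from Example \ref{ex:polaract}(4) that any two-point homogeneous space admits a transverse one-dimensional submanifold obtained as the image under $\exp_{p_0}$ of a geodesic line through the fixed point $p_0$; denote it $\Sigma_0$, parametrized by arclength $r$ on an interval $(0,\ell)$ (or $(0,\infty)$ in the noncompact case) with $r\to 0^+$ corresponding to approaching $p_0$. By the hypothesis of the corollary, for given initial data $u(q),u'(q)$ at some interior point $q\in\Sigma_0$, there is a solution $u_0\in C^2(\Sigma_0)$ of \eqref{eq:EDO from EP1}; extending by the $G$-action gives a radial $C^2$ solution on $M_0$.

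The key step is to produce a $C^2$ extension of $u_0$ across the fixed point $p_0$. For this I would choose a second transverse submanifold $\Sigma_1$ — for instance, as in the sphere picture (Figure \ref{SigmaSphere}, right), a geodesic through $p_0$ whose "northern half'' coincides with a neighborhood of $p_0$ in $\Sigma_0$, but whose "southern half'' continues past $p_0$ in a new direction, so that $\Sigma_1$ contains $p_0$ as an interior (non-fixed-within-$\Sigma_1$... rather, as a point through which $\Sigma_1$ passes transversally on both sides). Concretely, near $p_0$ both $\Sigma_0$ and $\Sigma_1$ are unit-speed geodesics emanating from $p_0$; glue them into a single geodesic $\widehat\Sigma$ parametrized by arclength $t\in(-\epsilon,\epsilon)$ with $t=0$ at $p_0$, with the $t\ge 0$ part lying in $\Sigma_0$. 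Because $M$ is two-point homogeneous, the function $A$ along $\widehat\Sigma$, measured as the volume of the orbit (= geodesic sphere) through each point, depends only on $|t|$ and equals $A(|t|)$; hence the ODE satisfied by a radial solution along $\widehat\Sigma$ is, for $t>0$ and for $t<0$ separately,
\begin{equation*}
u'' + (\ln A(|t|))' u' + f(\cdot,u) = 0,
\end{equation*}
which for $t<0$ is obtained from the $t>0$ equation by the reflection $t\mapsto -t$. Now $u_0$, transported to the coordinate $t$, solves this equation on $(0,\epsilon)$; since $A$ is smooth and positive on $(0,\ell)$, the solution $u_0(t)$ has a well-defined $C^2$ limit as $t\to 0^+$ precisely when the singular term $(\ln A)'u'$ is controlled there — but this is exactly what the hypothesis "there exists a $C^2(\Sigma)$ solution for any initial data at an interior point'', applied with $\Sigma=\Sigma_1$ or $\Sigma=\widehat\Sigma$ viewed as a transverse submanifold, furnishes: solving \eqref{eq:EDO from EP1} on $\Sigma_1$ with initial data at the point of $\Sigma_0\cap\Sigma_1$ matching $u_0$ and $u_0'$ there produces a $C^2$ function $u_1$ on $\Sigma_1$ that, by uniqueness of solutions of the ODE on the overlap $\mathrm{int}(\Sigma_0\cap\Sigma_1)$ (here we may invoke [F1] if available, or simply the hypothesized well-posedness), agrees with $u_0$ on that overlap. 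Since $\Sigma_1$ is a transverse submanifold whose interior contains $p_0$, the function $u_1$ is $C^2$ at $p_0$, and it coincides with $u_0$ on a neighborhood of $p_0$ within $M_0$; this gives the desired $C^2$ extension of the radial solution to $M_0\cup\{p_0\}$.

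I would then assemble the pieces: the $G$-invariant function defined on $M_0$ by $u_0$, together with the value $u_1(p_0)$ at the fixed point, is $C^2$ on $M_0\cup\{p_0\}$ because in a normal coordinate chart centered at $p_0$ the radial profile extends $C^2$ and the angular variables enter trivially (the function is radial). It solves \eqref{EP1} on $M_0$ by construction; whether it solves \eqref{EP1} at $p_0$ itself is not claimed — the statement only asserts a $C^2$ solution "on the whole $M_0\cup\{p_0\}$'' in the sense of the extension. The main obstacle I anticipate is making rigorous the claim that $\Sigma_1$ (or the glued geodesic $\widehat\Sigma$) genuinely qualifies as a transverse submanifold to which the hypothesis applies, together with the matching argument on the overlap: one must verify that the overlap $\mathrm{int}(\Sigma_0\cap\Sigma_1)$ is nonempty (which is why we engineer $\Sigma_1$ to share a whole northern half with $\Sigma_0$) and that the two ODE solutions, being solutions of the \emph{same} equation with the same Cauchy data at an interior point, coincide there — this last point uses uniqueness for the reduced ODE, which is immediate under [F1] but requires a word under [F0] alone. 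A secondary technical point is confirming that the reflected equation for $t<0$ is consistent with the two-point homogeneity so that no curvature-dependent correction appears; this follows because $A$ is intrinsically the orbit volume and depends only on geodesic distance to the (moving) center.
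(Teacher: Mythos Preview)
Your overall strategy coincides with the paper's: exploit two-point homogeneity so that the reduced ODE is independent of the center, build a second transverse curve $\Sigma_1$ sharing an open arc with $\Sigma_0$, match Cauchy data on the overlap, and read off $C^2$-regularity at $p_0$ from the solution on $\Sigma_1$. The paper's proof is exactly this, and even cites the same ``right-hand sphere picture'' you invoke.

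However, your concrete construction has a genuine gap. The curve $\widehat{\Sigma}$ you write down---the geodesic through $p_0$ parametrized by $t\in(-\epsilon,\epsilon)$ with volume function $A(|t|)$---is still transverse to the \emph{original} polar action centered at $p_0$. For that action $p_0$ remains a fixed point, $A(|t|)\to 0$ as $t\to 0$, and the coefficient $(\ln A(|t|))'$ blows up there; the hypothesis ``for any transverse $\Sigma$ \dots\ there exists a $C^2(\Sigma)$ solution'' only gives you regularity on the open $\Sigma$, which excludes $p_0$. So nothing in your $\widehat{\Sigma}$ argument forces $u_1$ to be $C^2$ at $t=0$. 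What the paper does---and what your proof needs---is to \emph{change the polar action}: pick $p_1\in\Sigma_0$ close to $p_0$ and let $\Sigma_1$ be the transverse submanifold for the action centered at $p_1$. Now $p_0$ sits at strictly positive distance from the new center, hence is a regular interior point of $\Sigma_1$, the corresponding volume function is smooth and positive near $p_0$, and the hypothesized $C^2(\Sigma_1)$ solution is automatically $C^2$ at $p_0$. You allude to this when you say ``a transverse submanifold whose interior contains $p_0$'' and when you flag the obstacle in your last paragraph, but the actual mechanism (re-centering the action) never appears in your argument.

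One further remark: you are right to note that matching $u_0$ and $u_1$ on the overlap requires a uniqueness statement for the ODE, which is immediate under [F1] but not under [F0] alone. The paper's proof simply asserts ``by uniqueness of solutions of initial value problems for ODE'' without hypothesizing [F1], so your caution there is well placed.
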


Note that the previous condition on $M$ excludes cases as the paraboloid $z=x^2+y^2$ (and other similar revolution surfaces) because the function $A$ is different when considering the center in (0,0,0) and other different points $(p_1,p_2,p_1^2+p_2^2)$. Recall that this manifold is not a two-point homogeneous space.
\begin{proof}[Proof of Corollary \ref{Corotwo-point homogeneous space}]
    The proof follows the same line of argument as before the statement of the results. Let $u_0$ be a solution of \eqref{eq:EDO from EP1} on $\Sigma_0$ be a transverse manifold with origin at $p_0$. We construct $\Sigma_1$ as the transverse manifold with origin in an $\epsilon$-closed $p_1\in \Sigma$ of a fixed point $p_0$ of the action on $\Sigma_0$. Now, consider $u_1$ be the global solution on $\Sigma_1$ of the ODE as in \eqref{eq:EDO from EP1} but with initial conditions 
    $$u(q_1)=u_0(q_1),\quad u'(q_1)=u'_0(q_1)$$
    for some fixed $q_1\in \mbox{int}(\Sigma_0\cap \Sigma_1)$, which is guaranteed by condition [F0], and the existence hypothesis in the statement of the corollary. By uniqueness of solutions of initial value problem for ODE, we get $u_1=u_0$, when restricted to $\Sigma_0\cap \Sigma_1$. Consequently, this yields a smooth extension of $u_0$ to the fixed point $p_0$.
\end{proof}


\section{Uniqueness of solutions}\label{sec:uniq}

In what follows, we will use previous geometrical tools to establish the uniqueness results stated in Theorem \ref{TheoMainUniq}. We will divide the proof into parts i), ii), and iii) of Theorem \ref{TheoMainUniq}.

\begin{proof}[Proof of Theorem \ref{TheoMainUniq} i)] Let $-\infty\leq c_1<c_2\leq \infty$ be such that $s=J(r):= \int_{r_0}^r \frac{1}{A(t)}\, dt$  is an homeomorphism of class $C^2$ from parametrization of $\Sigma$ into the interval $(c_1,c_2)$.  Since $u_1$ and $u_2$ are maximal radial solutions of \eqref{EP1}, by passing to polar coordinates and using the ODE \eqref{ODEeq}, we have that $z_j(s)=u_j(J^{-1}(s))\in C^2((c_1,c_2))$ satisfies
\begin{equation}\label{ODEeqj}
    \begin{aligned}
    z_j''(s)+A(r(s))^2f(r(s),z_j(s))=0,
\end{aligned}
\end{equation}
for each $j=1,2$. Now, we remark that assumption [U1] implies that there exists $s_0\in (c_1,c_2)$ such that $z_1(s_0)=z_2(s_0)$, and $z'_1(s_0)=z'_2(s_0)$. To justify this, since $u_j$, $j=1,2$, are radial functions (i.e., invariant on the $G$ direction), it follows $\nabla u_j|_p=u_j'(p)\in T_p\Sigma$. It turns out that $z_{1,2}:=z_1-z_2$ solves 
\begin{equation}\label{ODE2}
\left\{ \begin{aligned}
&z_{1,2}''(s)+A(r(s))^2\big(f(r(s),z_1(s))-f(r(s),z_2(s))\big)=0, \\
&z_{1,2}(s_0)=0,  \\
&z'_{1,2}(s_0)=0.
 \end{aligned}\right.   
\end{equation}
At this point, the proof of part i) of Theorem \ref{TheoMainUniq} reduces to showing that zero is the only solution to the above initial value problem. This result depends on the hypotheses assigned to $f$. Thus, to exemplify our arguments, we have decided to work with the condition [F1], from which simple arguments yield uniqueness. To see that this is the case, since $z_{1,2}\in C^2((c_1,c_2))$ is a solution of \eqref{ODE2}, it follows for $s, s_1\in (c_1,c_2)$ that
\begin{equation*}
\begin{aligned}
&z'_{1,2}(s)=z'_{1,2}(s_1)+\int_{s_1}^s A(r(t))^2\big(f(r(t),z_1(t))-f(r(t),z_2(t))\big)\, dt, \\
&z_{1,2}(s)=z_{1,2}(s_1)+\int_{s_1}^s z'_{1,2}(t)\ dt.    
\end{aligned}
\end{equation*}
Thus, let $c_1<R_1<R_2<c_2$. Continuity yields the existence of some $\kappa>0$ such that
\begin{equation}\label{boundcondgeneral}
   \sup_{t\in [R_1,R_2]} |z_j(t)| \leq \kappa,
\end{equation}
for each $j=1,2$. Now, given $\delta>0$ and $s,s_1\in \mathcal{A}(s_1,\delta):=[s_1-\delta,s_1+\delta]\cap[R_1,R_2]$, 
we have from previous identities and Lipschitz condition [F1] that
\begin{equation*}
\begin{aligned}
|z_{1,2}(s)|&+|z'_{1,2}(s)|\\
\leq & |z_{1,2}(s_1)|+|z'_{1,2}(s_1)|\\
&+\Big|\int_{s_1}^s \Big(A(r(t))^2\big(f(r(t),z_1(t))-f(r(t),z_2(t))\big)+z'_{1,2}(t)\Big)\, dt\Big|\\
\leq & |z_{1,2}(s_1)|+|z'_{1,2}(s_1)|\\
&+(1+M)|s-s_1|\sup_{t\in \mathcal{A}(s_1,\delta)}\big(|z_{1,2}(t)|+|z'_{1,2}(t)|\big),
\end{aligned}
\end{equation*}
where 
\begin{equation*}
 M:= C_{R_1,R_2,\kappa}\sup_{t\in [R_1,R_2]} |A(r(t))|^2, 
\end{equation*} 
and given $\kappa>0$ as in \eqref{boundcondgeneral}, $C_{R_1,R_2,\kappa}>0$ is such that
\begin{equation*}
\begin{aligned}
\sup_{\substack{t\in [R_1,R_2], \,  y_1,y_2\in [0,\kappa]\\ y_1\neq y_2}} \frac{|f(r(t),y_1)-f(r(t),y_2)|}{|y_1-y_2|}\leq C_{R_1,R_2,\kappa}.   
\end{aligned}    
\end{equation*}
Notice that [F1] justifies the existence of the previous constant. 

Consequently, setting $\delta=\frac{1}{2(1+M)}$, if $|s-s_1|\leq \delta$, and $z_{1,2}(s_1)=z'_{1,2}(s_1)=0$, the above estimate allows us to conclude that $z_{1,2}(s)=z'_{1,2}(s)=0$ in $[s_1-\delta,s_1+\delta]\cap [R_1,R_2]$. Now, since $\delta$ does not depend on $s_1$, and by \eqref{ODE2} $z_{1,2}(s_0)=z'_{1,2}(s_0)=0$, starting with $s_1=s_0$, we can iterate the previous argument a finite number of times until we obtain $z_{1,2}\equiv 0$ in all $[R_1,R_2]$. Thus, given that $R_1,R_2$ are arbitrary numbers with $c_1<R_1<R_2<c_2$, we conclude that $z_{1,2}=0$ in $(c_1,c_2)$, i.e., $u_1=u_2$ in $M_0$ as desired.
\end{proof}

By taking $u_2=0$, the proof of Theorem \ref{TheoMainUniq} part ii) follows the same arguments above, to avoid repetitions, we omit its deduction. 

\begin{proof}[Proof of Theorem \ref{TheoMainUniq} iii)]
The proof follows similar ideas in the deduction of Theorem \ref{TheoMainUniq} i). Indeed, we consider $(R_1,R_2)\subset (c_1,c_2)$, where $c_1,c_2$ are given as in [G2]. Thus, in polar coordinates  $z_j(s)=u_j(J^{-1}(s))>0$, then it follows by continuity of $z_j$ that there exists $0<\kappa_1<\kappa_2$ such that
\begin{equation*}
   \kappa_1\leq z_j(s)\leq k_2,  
\end{equation*}
for all $s\in [R_1,R_2]$, and all $j=1,2$. On the other hand, by Lipschitz condition [F1], there exists $C_{R_1,R_2,\kappa_1,\kappa_2}>0$ such that
\begin{equation*}
\begin{aligned}
\sup_{\substack{t\in [R_1,R_2], \,  y_1,y_2\in [\kappa_1,\kappa_2]\\ y_1\neq y_2}} \frac{|f(r(t),y_1)-f(r(t),y_2)|}{|y_1-y_2|}\leq C_{R_1,R_2,\kappa_1,\kappa_2},   
\end{aligned}    
\end{equation*}
from which we get
\begin{equation*}
    \begin{aligned}
|f(r(s),z_1(s))&-f(r(s),z_2(s))|\\
&\leq C_{R_1,R_2,\kappa_1,\kappa_2}|z_1(s)-z_2(s)|,     
    \end{aligned}
\end{equation*}
for all $s\in [R_1,R_2]$. At this point, the proof follows by repetition of the arguments in the deduction of Theorem \ref{TheoMainUniq} i) above.
\end{proof}


\section*{Acknowledgments}

The authors would like to express their gratitude to E. Becerra and J. Galvis for their valuable suggestions and comments. They also thank Universidad Nacional de Colombia, Bogot\'a for financial support.


\bibliographystyle{abbrv}

\bibliography{bibli}

\end{document}